\DeclareMathOperator{\lcm}{lcm}
\theoremstyle{plain}
\newtheorem{theorem}{Theorem}[section]
\newtheorem{lemma}[theorem]{Lemma}
\newtheorem{proposition}[theorem]{Proposition}
\newtheorem{corollary}[theorem]{Corollary}
\theoremstyle{definition}
\newtheorem{definition}[theorem]{Definition}
\theoremstyle{remark}
\title{Maximal colourings for graphs}
\author[1,2]{Raffaella Mulas\thanks{r.mulas@vu.nl}}
\affil[1]{Vrije Universiteit Amsterdam, Amsterdam, The Netherlands}
\affil[2]{Max Planck Institute for Mathematics in the Sciences, Leipzig, Germany}
\date{}
\begin{document}
	
	\maketitle
	
	\begin{flushright}
		\rightskip=1.8cm\textit{Coming, colours in the air\\
			Oh, everywhere\\
			She comes in colours} \\
		\vspace{.2em}
		\rightskip=.8cm---The Rolling Stones, \textit{She's a Rainbow}
	\end{flushright}
	\vspace{1em}

	\begin{abstract} We consider two different notions of graph colouring, namely, the $t$-periodic colouring for vertices that has been introduced in 1974 by Bondy and Simonovits, and the periodic colouring for oriented edges that has been recently introduced in the context of spectral theory of non-backtracking operators. For each of these two colourings, we introduce the corresponding colouring number which is given by maximising the possible number of colours. We first investigate these two new colouring numbers individually, and we then show that there is a deep relationship between them. \\
		
		\vspace{0.2cm} \noindent {\bf Keywords:} colouring numbers; periodic colouring; circularly partite graphs \\ \end{abstract}

	\section{Introduction}
	\subsection{Historical note}
	While graph theory was born in 1736, when Leonard Euler solved the Königsberg's Seven Bridges Problem \cite{Euler1736}, the history of graph colouring started in 1852, when the South African mathematician and botanist Francis Guthrie formulated the Four Colour Problem \cite{Voloshin2009,Jensen2011,Maritz2012,Wilson2013}. Francis Guthrie noticed that, when colouring a map of the counties of England, one needed at least four distinct colours if two regions sharing a common border could not have the same colour. Moreover, he conjectured (and tried to prove) that four colours were sufficient to colour any map in this way. His brother, Frederick Guthrie, supported him by sharing his work with Augustus De Morgan, of whom he was a student at the time, and De Morgan immediately showed his interest for the problem \cite{Guthrie1880}. On October 23, 1852, De Morgan presented Francis Guthrie's conjecture in a letter to Sir William Rowan Hamilton, in which he wrote:
	\begin{quote}
		\emph{The more I think of it the more evident it seems.}
	\end{quote}But Hamilton replied:
	\begin{quote}
		\emph{I am not likely to attempt your quaternion of colour very soon.}
	\end{quote} De Morgan then tried to get other mathematicians interested in the conjecture, and it eventually became one of the most famous open problems in graph theory and mathematics for more than a century. After several failed attempts in solving the problem, Francis Guthrie's conjecture was proved to be true in 1976, by Kenneth Appel and Wolfgang Haken, with the first major computer-assisted proof in history \cite{Appel1977}.\newline
	
	The Four Colour Theorem can be equivalently described in the language of graph theory as follows. Let $G=(V,E)$ be a \emph{simple graph}, that is, an undirected, unweighted graph without multi-edges and without loops. Two distinct vertices $v$ and $w$ are called \emph{adjacent}, denoted $v\sim w$ or $w\sim v$, if $\{v,w\}\in E$. A \emph{$k$-colouring of the vertices} is a function $c:V\to \{1,\ldots,k\}$, and it is \emph{proper} if $v\sim w$ implies $c(v)\neq c(w)$. 
	The \emph{vertex colouring number} $\chi=\chi(G)$ is the minimum $k$ such that there exists a proper $k$-colouring of the vertices. Moreover, the graph $G$ is called \emph{planar} if it can be embedded in the plane, that is, it can be drawn on the plane in such a way that its edges intersect only at their endpoints.
	\begin{theorem}[Four Colour Theorem, 1976]
		If $G$ is a planar simple graph, then $\chi\leq 4$.
	\end{theorem}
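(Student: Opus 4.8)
The only route to this theorem known at present is the method of \emph{reducible configurations} and \emph{discharging}, and I will follow it, being candid that its decisive steps are not feasible by hand. The first move is a reduction to \emph{planar triangulations}: given a planar simple graph $G$, fix a plane embedding and add edges, without creating loops or multi-edges, until every face is bounded by a triangle; a proper $4$-colouring of the resulting triangulation restricts to one of $G$, and the cases $|V|\le 4$ are trivial. So it suffices to prove $\chi\le 4$ for triangulations, and we argue by contradiction: let $T$ be a triangulation that is a counterexample with the fewest vertices. An elementary Kempe-chain argument then shows that $T$ has no vertex of degree at most $4$ — a vertex of degree $\le 3$ can be deleted and its colour restored afterwards, and for a vertex of degree $4$ one of the two relevant two-colour chains among its neighbours can be swapped to free a colour — so $T$ has minimum degree at least $5$.

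Next, Euler's formula $|V|-|E|+|F|=2$, combined with $3|F|=2|E|$ for a triangulation, yields $\sum_{v}\bigl(6-\deg(v)\bigr)=12>0$. The crux is now to produce an \emph{unavoidable set of reducible configurations}: a finite list $\mathcal{C}$, where each member is a small near-triangulation together with its boundary cycle, such that (i) every planar triangulation contains some member of $\mathcal{C}$, and (ii) no member of $\mathcal{C}$ can occur inside a minimum counterexample. Parts (i) and (ii) together contradict the existence of $T$ and finish the proof.

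For (ii), a configuration $C$ with boundary ring $R$ is \emph{reducible} if every proper $4$-colouring of $R$ either extends across $C$ directly, or can be converted into one that does by Kempe-chain recolourings performed outside $C$; since $R$ is a fixed cycle, this is a finite (though large) check over the colourings of $R$, made precise through the notions of $D$-reducibility and $C$-reducibility, and it is carried out by computer for each of the several hundred configurations in $\mathcal{C}$. For (i), one uses \emph{discharging}: assign each vertex $v$ the charge $6-\deg(v)$, so the total charge is $12$; redistribute charge by a fixed finite set of local rules; then, assuming $T$ avoided every configuration in $\mathcal{C}$, show that after redistribution every vertex has charge at most $0$, contradicting positivity of the total.

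The genuine difficulty — and where I expect to be unable to proceed without machine assistance — is that $\mathcal{C}$ and the discharging rules must be engineered simultaneously, so that the rules certify unavoidability of precisely a set of configurations each of which is in turn reducible. This is exactly what forced the enormous case analysis of Appel and Haken \cite{Appel1977} (an unavoidable set of nearly $1500$ configurations), later streamlined by Robertson, Sanders, Seymour and Thomas to $633$ configurations and $32$ discharging rules. Neither the search for a workable $\mathcal{C}$ nor the reducibility verification is humanly tractable, so the proof is genuinely computer-assisted, and no short self-contained argument is to be expected.
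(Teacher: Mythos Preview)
The paper does not prove this theorem. It is stated in the introductory historical section purely as background, with a citation to Appel and Haken \cite{Appel1977}, and the paper immediately moves on to other matters. There is therefore no ``paper's own proof'' to compare your proposal against.

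That said, your sketch is an accurate high-level account of the only known route to the theorem: reduction to triangulations, the minimum-counterexample argument giving minimum degree at least $5$, the Euler-formula identity $\sum_v(6-\deg v)=12$, discharging to establish unavoidability, and computer verification of reducibility for each configuration in the unavoidable set. The figures you quote (roughly $1500$ configurations for Appel--Haken, $633$ configurations and $32$ discharging rules for Robertson--Sanders--Seymour--Thomas) are correct, and your candour that the decisive case analysis is not humanly tractable is appropriate. If this were a paper that actually undertook to prove the Four Colour Theorem, your outline would be the right skeleton; here, however, the theorem is merely quoted, so no proof is expected or supplied.
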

	
	Despite the huge importance of this result, quoting William Thomas Tutte \cite{Tutte1978}, \begin{quote}
		\emph{The Four Colour Theorem is the tip of the iceberg, the thin end of the wedge and the first cuckoo of Spring.}
	\end{quote}
	In fact, the study of the vertex colouring number $\chi$ has shown to be interesting also for several other problems in graph theory, as well as for applications to partitioning problems. Moreover, other notions of colouring have been introduced, and each of them has led to numerous challenging problems, many of which are beautifully summarised in \cite{Tutte1978,Jensen2011}.\newline
	
	For instance, another common notion in the literature is the following. Given a simple graph $G=(V,E)$, a \emph{proper $k$-colouring of the edges} is a function $c:E\rightarrow \{1,\ldots,k\}$ such that, if the distinct edges $e$ and $f$ have one endpoint in common, then $c(e)\neq c(f)$. The \emph{edge colouring number} $\chi^*=\chi^*(G)$ is the minimum $k$ such that there exists a proper $k$-colouring of the edges. In 1964, Vadim Georgievich Vizing \cite{Vizing} proved the following result:
	
	\begin{theorem}[Vizing's Theorem, 1964]
		If $G$ is a simple graph with maximum vertex degree $\Delta$, then $$\chi^*\in \{\Delta,\Delta+1\}.$$
	\end{theorem}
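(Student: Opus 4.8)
The lower bound is immediate: if $w$ is a vertex of degree $\Delta$, then the $\Delta$ edges incident to $w$ pairwise share the endpoint $w$, so a proper edge colouring must give them $\Delta$ distinct colours; hence $\chi^*\geq\Delta$. The content is the upper bound $\chi^*\leq\Delta+1$, which I would prove by induction on $|E|$, the case $|E|=0$ being vacuous.

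For the inductive step, fix an edge $e=\{u,v\}$ and, applying the inductive hypothesis to $G-e$, fix a proper edge colouring $c$ of $G-e$ using colours from $\{1,\dots,\Delta+1\}$. Call a colour \emph{missing} at a vertex $w$ if no edge at $w$ carries it; since every vertex has degree at most $\Delta$, some colour is missing at each vertex (at least two at $u$ and at $v$ once $e$ is deleted). If a colour is missing at both $u$ and $v$ we may assign it to $e$ and are done, so assume otherwise. I would then build a maximal \emph{fan} at $u$: a sequence $v=v_1,v_2,\dots,v_k$ of pairwise distinct neighbours of $u$ such that, for each $j<k$, the colour $c(\{u,v_{j+1}\})$ is missing at $v_j$. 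Pick a colour $\alpha$ missing at $u$ and a colour $\beta$ missing at $v_k$. If $\beta$ is also missing at $u$, then \emph{rotating} the fan --- resetting $c(\{u,v_j\}):=c(\{u,v_{j+1}\})$ for all $j<k$ and then colouring $\{u,v_k\}$ with $\beta$ --- yields a proper colouring of $G$ (properness at each $v_j$ is exactly the defining property of the fan, and properness at $u$ holds because $\beta$ is missing there), so again we are done.

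Otherwise $\beta$ sits on a unique edge at $u$, and maximality of the fan forces its other endpoint to be some $v_i$ with $1<i<k$ (otherwise the fan could be extended); then $c(\{u,v_i\})=\beta$ makes $\beta$ missing at $v_{i-1}$. Now I would invoke Kempe chains: let $P$ and $P'$ be the maximal paths whose edges alternate between the colours $\alpha$ and $\beta$, starting at $v_k$ and at $v_{i-1}$ respectively. Since $\alpha$ is missing at $u$, the vertex $u$ can lie on an $\alpha\beta$-chain only as an endpoint; likewise $v_k$ and $v_{i-1}$, since $\beta$ is missing at each of them. Using this, together with the fact that two $\alpha\beta$-chains are either identical or vertex-disjoint, a short case analysis on the endpoints of $P$ and $P'$ shows that \emph{either} $P$ avoids $u$ and does not end at $v_{i-1}$, \emph{or} $P'$ avoids $u$. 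In the first case I swap the colours $\alpha$ and $\beta$ along $P$, rotate the whole fan, and colour the freed edge $\{u,v_k\}$ with $\alpha$; in the second case I swap along $P'$, rotate only the initial segment $v_1,\dots,v_{i-1}$ of the fan, and colour the freed edge $\{u,v_{i-1}\}$ with $\alpha$. Either way every edge of $G$ is now properly coloured, closing the induction.

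The step I expect to be the main obstacle is the verification that these recolourings actually do what is claimed: that swapping $\alpha$ and $\beta$ along the chosen chain leaves the colouring proper, that the relevant initial segment of the sequence is still a fan afterwards, and that $\alpha$ stays missing at $u$ \emph{and} at the vertex whose incident edge we finally colour --- all at once. The facts that make this go through are that the colours on the fan edges $\{u,v_2\},\dots,\{u,v_k\}$ are pairwise distinct and none of them equals $\alpha$, so the only fan edge the swap can possibly touch is $\{u,v_i\}$ (the one coloured $\beta$) --- which is precisely why rotating only up to $v_{i-1}$ suffices in the case where we are forced to use $P'$ --- together with the bookkeeping of Kempe chains recalled above. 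Combining the two bounds gives $\chi^*\in\{\Delta,\Delta+1\}$.
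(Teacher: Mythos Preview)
Your argument is the standard Vizing fan/Kempe-chain proof and is correct as sketched; the bookkeeping you flag as the main obstacle (that the swap touches no fan edge except possibly $\{u,v_i\}$, that $v_{i-1}$ cannot lie on $P$ in the first case, and that $\alpha$ remains missing at $u$ because the chosen chain avoids $u$) all goes through for exactly the reasons you indicate.

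There is nothing to compare against, however: the paper does not prove Vizing's Theorem. It appears in the historical introduction as a cited classical result (the 1964 reference), on the same footing as the Four Colour Theorem, and is later \emph{invoked} --- together with K\H{o}nig's line colouring theorem --- in the proof that $\chi^o>1$ implies $\chi^*=\Delta$, but no argument for it is given or attempted. So your proposal is a correct proof of a theorem the paper merely quotes.
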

	Graphs with $\chi^*=\Delta$ are said to be of \emph{class one,} and graphs with $\chi^*=\Delta+1$ are said to be of \emph{class two.}

	\subsection{Setting and aim of this paper}
	
	Here we shall focus on two different notions of periodic colouring for a simple graph $G=(V,E)$, as well as on their connection. One of them is the notion of $t$-periodic colouring for vertices that was introduced by John Adrian Bondy and Miklós Simonovits in 1974 \cite{Bondy-Simonovits} (cf.\ Definition 5.4.3 in \cite{Csikvari}). Before defining it, we recall the following
	\begin{definition}
		A \emph{path} of length $\ell\geq 1$ is a sequence\begin{equation*} v_1,\ldots,v_\ell,v_{\ell+1}\end{equation*}of distinct vertices such that $v_j\sim v_{j+1}$ for each $j\in\{1,\ldots,\ell\}$. A \emph{cycle} of length $\ell\geq 3$ is a path where $v_{\ell+1}=v_1$.
	\end{definition}
	
	\begin{definition}A (not necessarily proper) colouring of the vertices of $G$ is \emph{$t$-periodic} if any path of length $t$ in $G$ has endpoints of the same colour.
	\end{definition}
	
	Bondy and Simonovits used this notion as a tool to solve a conjecture of Paul Erdős and to prove that, if $G$ has $N$ nodes and at least $100cN^{1+1/c}$ edges, for a positive constant $c$, then $G$ contains a cycle of length $2\ell$, for every $\ell\in[c,cN^{1/c}]$. In particular, they showed that, for any $t$, if $G$ has average degree $2|E|/|V|\geq 4$, then any $t$-periodic colouring of $G$ has at most $2$ colours (see also  Lemma 5.4.4 in \cite{Csikvari}). However, they did not study properties of the $t$-periodic colouring which were not needed for their main result.\newline
	
	Here we introduce and investigate the following colouring number:
	\begin{definition}
		The \emph{vertex $t$-periodic colouring number} $\chi_t=\chi_t(G)$ is the largest $k$ such that $G$  has a $t$-periodic colouring of the vertices with $k$ colours.
	\end{definition}
	
	Clearly, for each $k\leq \chi_t$, $G$ has a $t$-periodic colouring with $k$ colours. This is why it is appropriate to consider the maximum number of such colours, as opposed to the definitions of $\chi$ and $\chi^*$, where the minimum number of colours is taken.\\
	
	Now, choosing an \emph{orientation} for an edge $\{v,w\}\in E$ means letting one of its endpoints be its \emph{input} and the other one be its \emph{output}. We let $e=[v, w]$ denote the oriented edge whose input is $v$ and whose output is $w$. 
	Moreover, we let \begin{equation*}\mathcal{O}:=\{[v,w], [w,v]\,:\, v\sim w\} \end{equation*}denote the set of oriented edges of $G$.\newline
	
	Before introducing the other notion of periodic colouring that we shall focus on, we recall that a graph is called \emph{$k$-partite} if it admits a proper $k$-colouring of the vertices.
	
	\begin{definition}[\cite{MulasZZ}] Given $k\in\mathbb{N}_{\geq 1}$, $G$ is \emph{circularly $k$--partite} if the set of its oriented edges can be partitioned as $\mathcal{O}=\mathcal{O}_1\sqcup \ldots \sqcup \mathcal{O}_k$, where the sets $\mathcal{O}_j$ are non-empty and satisfy the property that \begin{equation*} [v,w]\in \mathcal{O}_j \Longrightarrow [w,z]\in \mathcal{O}_{j+1}, \, \forall z\sim w\,:\, z\neq v,\end{equation*}where we also let $\mathcal{O}_0:=\mathcal{O}_k$ and $\mathcal{O}_{k+1}:=\mathcal{O}_1$. \end{definition}
	
	In particular, if $G$ is circularly $k$--partite, then the corresponding partition $\mathcal{O}=\mathcal{O}_1\sqcup \ldots \sqcup \mathcal{O}_k$ of the oriented edges naturally defines a $k$-colouring $c:\mathcal{O}\rightarrow \{1,\ldots,k\}$ of the oriented edges by letting\begin{equation*} c([v,w])=j\iff [v,w]\in \mathcal{O}_j. \end{equation*}
	We can also see this colouring as a function $c:\mathcal{O}\rightarrow \mathbb{Z}_k$, where $\mathbb{Z}_k$ is the multiplicative group of integers modulo $k$, and it is \emph{periodic} in the sense that it satisfies
	\begin{equation*} c([v,w])=j \Longrightarrow c([w,z])=j+1, \, \forall z\sim w\,:\, z\neq v.\end{equation*}
	
	Circularly ($k$--)partite graphs have been introduced in \cite{MulasZZ} in the following context. Given a simple graph $G$ as above, its \emph{non-backtracking graph} is the directed graph $\mathcal{G}$ whose vertex set is the set $\mathcal{O}$ of oriented edges of $G$, and such that there is a directed edge from $[v_1,v_2]\in \mathcal{O}$ to $[w_1,w_2]\in \mathcal{O}$ if and only $v_2=w_1$ and $v_1\neq w_2$. The \emph{non-backtracking matrix} of $G$ is defined as the transpose of the adjacency matrix of $\mathcal{G}$. It has been introduced by Ki-ichiro Hashimoto in 1989 \cite{hashimoto1989zeta}, and it has been investigated since then in many areas of graph theory and network science (see for instance \cite{terras2010zeta,bass1992ihara,cooper2009,backhausz2015ramanujan,shrestha2015message,castellano2018relevance,torres2021nonbacktracking,Leo2007,torres2020node,torres2019non,coste2021eigenvalues,glover2021some,pastor2020localization,bordenave2018nonbacktracking,krzakala2013spectral,martin2014localization,Arrigo1,Arrigo2,Gronford1,Mellor1,Godsil16,Huang19}). Recently, in \cite{NB-Laplacian}, Jürgen Jost, Leo Torres and the author of this paper introduced the \emph{non-backtracking Laplacian} of $G$ as the normalized Laplacian $\mathcal{L}$ of $\mathcal{G}$, and they showed that the spectral properties of this new operator are, in some ways, more precise than that of the non-backtracking matrix. In \cite{NB-Laplacian} it is shown, in particular, that the eigenvalues of the non-backtracking Laplacian $\mathcal{L}$ are contained in the complex disc $D(1,1)$. Moreover, the \emph{spectral gap from $1$} is defined as the smallest distance $\varepsilon$ between the eigenvalues of $\mathcal{L}$ and the point $1$ on the real line, and a sharp lower bound for $\varepsilon$ is provided, but no upper bound is given in \cite{NB-Laplacian}. As a continuation of this work, in \cite{MulasZZ}, Dong Zhang, Giulio Zucal and the author of this paper proved additional spectral properties of the non-backtracking Laplacian, including a sharp upper bound for $\varepsilon$. And in order to prove this upper bound (Theorem 5.1 in \cite{MulasZZ}), they had to introduce the above class of circularly partite graphs.\newline
	
	Here we introduce also the following colouring number:
	
	\begin{definition}The \emph{oriented edge periodic colouring number} $\chi^o=\chi^o(G)$ is the largest $k$ such that $G$ is circularly $k$--partite.\end{definition}
	
	We shall see why, as for $\chi_t$, also in this case it is natural to consider the maximum number of colours, and we shall investigate various properties of $\chi^o$. Moreover, we shall see that there is a deep connection between $\chi_t$ and $\chi^o$, despite the fact that their definitions are quite different, and they arose in two unrelated contexts.\\
	
	
	Throughout the paper, we shall consider simple graphs, and we shall assume that all graphs are \emph{connected.}
	
	\subsection*{Structure of the paper}
	
	In Section \ref{section:characterisation} we
	give a characterisation of circularly $k$-partite graphs that does not involve the set of oriented edges. Then, in Section \ref{section:chi^p} we prove various properties of the oriented edge periodic colouring number $\chi^o$, and similarly, in Section \ref{Section:Bondy} we investigate properties of the vertex $t$-periodic colouring number $\chi_t$. Finally, in Section \ref{Section:both}, we study the relationship between $\chi^o$ and $\chi_t$.


	\section{Characterisation of circularly partite graphs}\label{section:characterisation}

	As pointed out already in \cite{MulasZZ}, all graphs are circularly $1$-partite, and a graph is circularly $2$-partite if and only if it is bipartite. An example of a circularly $3$-partite graph is given in Figure \ref{fig:3partite} below.\newline
	
	\begin{figure}[h]
		\centering
		\includegraphics[width=9cm]{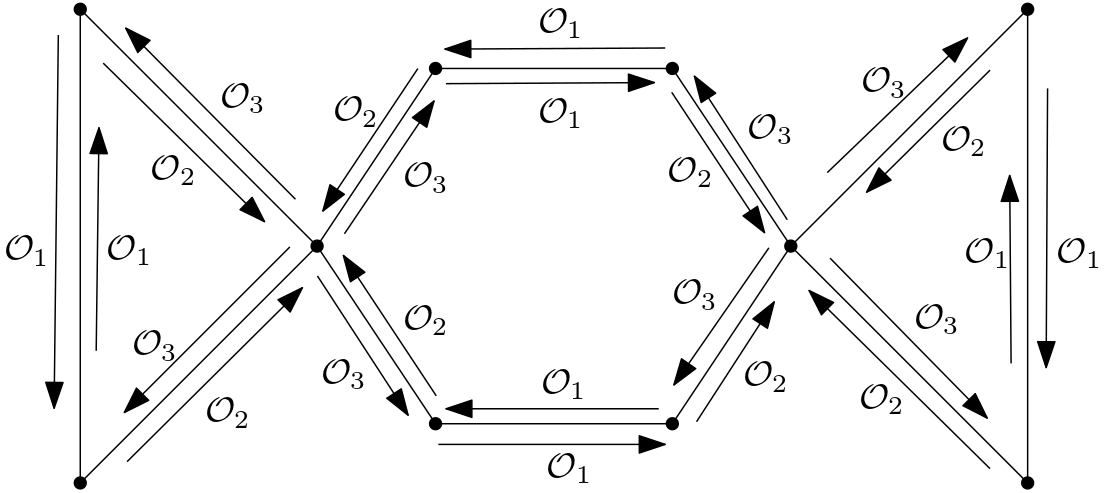}
		\caption{A circularly $3$-partite graph.}
		\label{fig:3partite}
	\end{figure}

	We shall now give a characterisation of all circularly $k$-partite graphs that does not require looking at the set of oriented edges. Before, we need to define extended star graphs, as well as the diameter of a graph.
	
	\begin{definition}
		A graph $G$ is an \emph{extended star graph} if it is a tree (that is, it has no cycles) and it has exactly one vertex of degree $\geq 3$.
	\end{definition}
	
	For instance, star graphs with at least $3$ edges are extended star graphs.

	\begin{definition}
		The \emph{diameter} of a graph is the length of the shortest path between the most distanced nodes. 
	\end{definition}
	
	We can characterise all circularly $k$-partite graphs as follows.

	\begin{theorem}\label{thm:part}   \begin{enumerate} \item The path graph of length $M$ is circularly $k$--partite for each $k\in\{1,\ldots,2M\}$. 
			\item An extended star graph of diameter $M$ is circularly $k$--partite for each $k\in\{1,\ldots,M\}$.
			\item For $k\geq 1$, a graph $G$ which is neither a path nor an extended star graph is circularly $k$--partite if and only if the following two conditions hold:
			\begin{itemize}
				\item The length of each cycle is a multiple of $k$, and
				\item The length $\ell$ of any path connecting two distinct vertices of degree $\geq 3$ is such that $2\ell$ is a multiple of $k$.
			\end{itemize}
		\end{enumerate}
	\end{theorem}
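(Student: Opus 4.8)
The plan is to work with the periodic colouring $c:\mathcal{O}\to\mathbb{Z}_k$ directly and to translate the defining implication $c([v,w])=j\Rightarrow c([w,z])=j+1$ (for all $z\sim w$, $z\neq v$) into a statement about walks in $G$. The key observation I would establish first is a \emph{propagation lemma}: if $P=u_0,u_1,\dots,u_m$ is a path in $G$, then $c([u_{m-1},u_m]) = c([u_0,u_1]) + (m-1) \pmod{k}$, obtained by iterating the defining implication along the path (each step is legal because consecutive edges of a path do not backtrack). A companion fact is that reversing a single edge flips the colour by a controlled amount: if $v\sim w$ and $w$ has a neighbour $z\neq v$, then going $v\to w\to z$ and then back $z\to w\to v$ shows $c([w,v]) = c([z,w])+1 = c([v,w])+2 \pmod k$; hence \emph{reversing an oriented edge adds $2$ to its colour} whenever the head has degree $\geq 2$. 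These two facts are the engine for everything.

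Next I would handle the three cases. For (1), the path graph $P_M$ with edges $e_1,\dots,e_M$ (oriented consistently left to right): here no vertex has degree $\geq 3$, so the defining implication only constrains consecutive edges, giving $c([u_{i},u_{i+1}])$ free to be assigned and then forced along; one checks that assigning the $2M$ oriented edges the residues $0,1,\dots,M-1$ (left-to-right orientations) and $M,M+1,\dots,2M-1$ (right-to-left) respectively is a valid periodic colouring for $k=2M$, and truncating/reducing gives all smaller $k$. For (2), in an extended star of diameter $M$ with centre $x$, any path of length $M$ passes through $x$; colour each oriented edge by its ``signed distance'' information so that edges pointing toward $x$ get one progression and edges pointing away get the continuation, realising $k=M$ and, by reduction mod smaller $k$, every $k\leq M$. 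For (3), the forward direction: given a circularly $k$-partite colouring and a cycle $u_0,\dots,u_\ell=u_0$, the propagation lemma applied around the cycle forces $c([u_0,u_1]) \equiv c([u_0,u_1]) + \ell \pmod k$, i.e.\ $k\mid \ell$; and for a path of length $\ell$ between two vertices $s,t$ of degree $\geq 3$, propagation gives $c$ of the last edge in terms of the first, while the degree-$\geq 3$ condition at \emph{both} ends lets me use the reversal fact to close up a ``doubled'' walk $s\rightsquigarrow t\rightsquigarrow s$ of length $2\ell$, yielding $k \mid 2\ell$.

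The converse in (3) is where the real work lies, and I expect it to be the main obstacle. Assuming every cycle length is divisible by $k$ and every $s$-$t$ path (between degree-$\geq 3$ vertices) has $2\ell$ divisible by $k$, I need to \emph{construct} a valid colouring $c:\mathcal{O}\to\mathbb{Z}_k$. The natural approach: pick a root vertex $r$ and a base oriented edge, then define $c([v,w])$ via the propagation rule along any walk from the base — the content is showing this is \emph{well-defined}, i.e.\ independent of the walk. Two walks between the same endpoints differ by a closed walk, and every closed walk in $G$ decomposes (in the cycle space sense) into simple cycles and back-and-forth traversals of bridges/path-segments; the cycle contributions vanish mod $k$ by hypothesis one, and I must check the tree-like ``detour'' contributions vanish using hypothesis two together with the reversal-adds-$2$ fact — this is exactly why the condition is on $2\ell$, not $\ell$, and why vertices of degree $\geq 3$ are singled out (at a degree-$2$ vertex a walk cannot non-backtrack-turn around, so no constraint is generated there). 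I would make this precise by a careful induction on the structure of $G$ obtained by contracting its maximal paths between branch vertices to get a multigraph whose cycle space is generated by the given cycles, checking consistency on a spanning tree of that multigraph and then on each extra edge. Subtleties to watch: graphs that are a single cycle (no degree-$\geq 3$ vertex, so only the first bullet applies, and it must be read as $k\mid M$), and ensuring each colour class $\mathcal{O}_j$ is non-empty, which holds automatically once $k$ is at most the relevant length parameter.
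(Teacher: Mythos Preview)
Your overall plan matches the paper's: propagate the colour along non-backtracking paths, and use the structure at vertices of degree $\geq 3$ to close up a there-and-back walk. However, your ``reversal adds $2$'' lemma is wrong, both in its derivation and in its conclusion. In the chain ``$v\to w\to z$ then $z\to w\to v$'', the implicit step linking $c([w,z])$ to $c([z,w])$ is exactly a backtrack, so the defining implication does not apply; concretely, $[v,w]\in\mathcal{O}_j$ yields $c([w,z])=j+1$, \emph{not} $c([z,w])=j+1$. At a vertex of degree~$2$ there is no local relation whatsoever between $c([v,w])$ and $c([w,v])$.

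The correct reversal fact---and the one the paper actually uses---needs degree $\geq 3$ at the pivot vertex, and the shift is $+1$, not $+2$. If $\deg w\geq 3$ with neighbours $v,z,z'$, then $[v,w]\in\mathcal{O}_j$ gives $[w,z']\in\mathcal{O}_{j+1}$, while $[z,w]\in\mathcal{O}_i$ gives $[w,z']\in\mathcal{O}_{i+1}$ and $[w,v]\in\mathcal{O}_{i+1}$; comparing at $[w,z']$ forces $i=j$, so all in-edges at $w$ share a colour, all out-edges share a colour, and $c([w,v])=c([v,w])+1$. With this correction your doubled-walk computation for the second bullet of (3) closes correctly:
\[
c([u_0,u_1]) \;\equiv\; c([u_0,u_1]) + (\ell-1) + 1 + (\ell-1) + 1 \;=\; c([u_0,u_1]) + 2\ell \pmod{k},
\]
hence $k\mid 2\ell$. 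With your erroneous $+2$ at each end one would obtain $k\mid 2\ell+2$ instead, which is false in general.

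For the converse of (3), your plan (fix a base oriented edge, define $c$ by propagation, check well-definedness via a cycle-space decomposition of the contracted branch-vertex multigraph) is sound and considerably more explicit than the paper's own argument, which simply declares this direction ``clear from the figure''. Just be sure to phrase the well-definedness check in terms of non-backtracking walks between oriented edges, and to use the corrected $+1$ reversal at branch vertices when splicing the forward and backward halves; that is precisely where the hypothesis $k\mid 2\ell$ (rather than $k\mid\ell$) is consumed.
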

	\begin{proof}
		The first two claims are straightforward, and so is the third claim for $k=1$, since all graphs are circularly $1$-partite and the two conditions are satisfied by all graphs for $k=1$. For $k=2$, we know that a graph is circularly $2$-partite if and only if it is bipartite, therefore if and only if all cycles have even length. Together with the fact that $2\ell$ is always a multiple of $2$, this proves the claim for $k=2$.\newline
		
		Now, fix $k\geq 3$ and let $G$ be a circularly $k$-partite graph which is not a path. Clearly, the length of each cycle in $G$ must be a multiple of $k$. To see that also the second condition holds, assume that there is a path of length $\ell$ between two distinct vertices $v$ and $w$ of degree $\geq 3$. Assume also, without loss of generality, that one of the oriented edges that have $v$ as an input belongs to $\mathcal{O}_1$. Then, since $\deg v\geq 3$, all oriented edges that have $v$ as an input must be in $\mathcal{O}_1$, and all oriented edges that have $v$ as an output must be in $\mathcal{O}_k$. This leads us to the situation depicted in Figure \ref{fig:thm-partite}, where we also let $\mathcal{O}_{k+r}:=\mathcal{O}_r$ for any $r$. In particular, we have an oriented edge in $\mathcal{O}_{2\ell}$ that has $v$ as output, but we know that all oriented edges that have $v$ as output belong to $\mathcal{O}_k$. This implies that $2\ell$ must be a multiple of $k$. The fact that, vice versa, a graph satisfying conditions 1 and 2 is circularly $k$-partite, is also clear from Figure \ref{fig:thm-partite}.
		
		\begin{figure}[h]
			\centering
			\includegraphics[width=9cm]{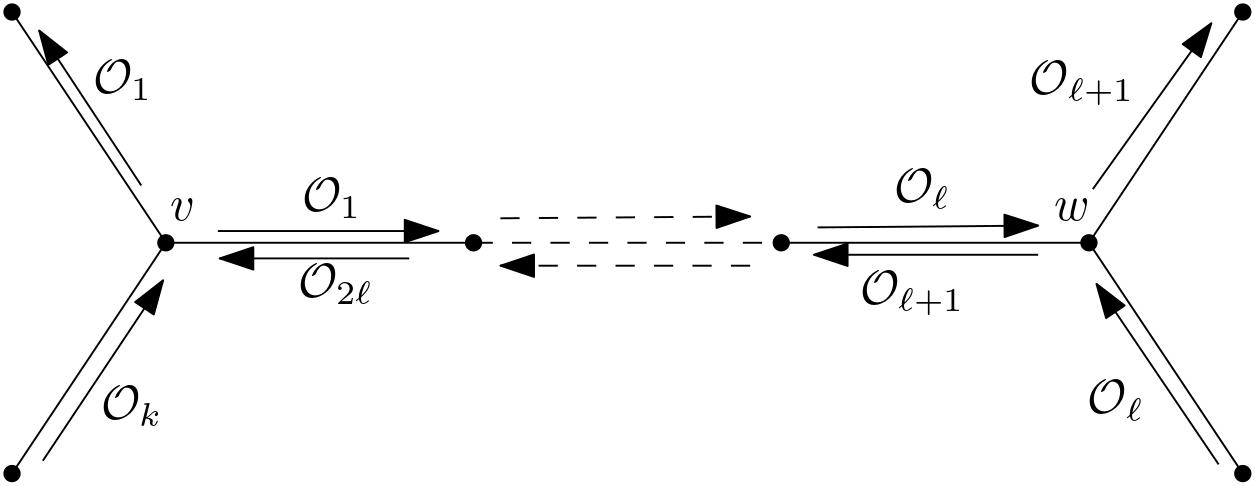}
			\caption{An illustration of the proof of Theorem \ref{thm:part}.}
			\label{fig:thm-partite}
		\end{figure}
		
	\end{proof}

	By Theorem \ref{thm:part}, it is straightforward to see, for example, that a Mickey Mouse graph (Figure \ref{fig:mickey}) with both ears of length $2k$, the face of length $5k$ and distance $k$ between the ears is circularly $k$-partite.\newline
	
	\begin{figure}[h]
		\centering
		\includegraphics[width=4cm]{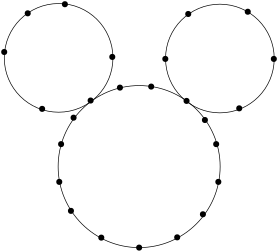}
		\caption{A Mickey Mouse graph is circularly $k$-partite.}
		\label{fig:mickey}
	\end{figure}

	Moreover, two immediate consequences of Theorem \ref{thm:part} are the following corollaries.
	
	\begin{corollary}\label{cor:ll}
		Let $G$ be a circularly $k$-partite graph which is not a path graph. If we know  which set $\mathcal{O}_j$ a given oriented edge $[v,w]$ belongs to, then by induction we can also infer to which set each oriented edge belongs to.
	\end{corollary}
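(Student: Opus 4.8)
The plan is to show that, writing $c:\mathcal{O}\to\mathbb{Z}_k$ for the colouring associated with the partition $\mathcal{O}=\mathcal{O}_1\sqcup\ldots\sqcup\mathcal{O}_k$, the single value $c([v,w])$ determines $c(e)$ for every $e\in\mathcal{O}$; the ``induction'' announced in the statement will be an induction on distance in the non-backtracking graph $\mathcal{G}$ of $G$. The key elementary fact, immediate from the definition of circularly $k$-partite, is that one non-backtracking step shifts the colour by exactly one: if $a$ and $z$ are distinct neighbours of a vertex $b$, then $c([b,z])=c([a,b])+1$. Equivalently, the colour increases by $1$ along every arc of $\mathcal{G}$, so along any walk in the underlying undirected graph of $\mathcal{G}$ --- where traversing an arc backwards subtracts $1$ --- the colour of every oriented edge visited is determined by the colour of the first one. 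Hence it suffices to prove that the underlying undirected graph of $\mathcal{G}$ is connected: then, starting from $[v,w]$, one recovers $c(e)$ for all $e\in\mathcal{O}$ by induction on the distance from $[v,w]$ in that graph.

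To establish this connectivity I will use that $G$ is connected and not a path graph; concretely, that $G$ then contains a vertex $x$ of degree $\geq 3$. (A connected graph has no such vertex exactly when it is a path or a cycle, and for a path or a cycle $\mathcal{G}$ genuinely splits into its two natural directed halves and the statement fails; so ``path graph'' is to be read here as excluding cycles as well.) First, all oriented edges incident to $x$ lie in one connected component $W$ of the underlying graph of $\mathcal{G}$: for distinct neighbours $a$ and $b$ of $x$ there is an arc $[a,x]\to[x,b]$, and since $x$ has at least three neighbours these arcs link the whole star of $x$ together --- this is, in disguise, the fact used in the proof of Theorem~\ref{thm:part} that at a vertex of degree $\geq 3$ all incoming oriented edges lie in a single class $\mathcal{O}_j$ and all outgoing ones in $\mathcal{O}_{j+1}$. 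Next, given an arbitrary oriented edge $[p,q]$, choose a simple path $x=y_0,y_1,\ldots,y_s=q$ in $G$; since the path is simple, $[y_0,y_1]\to[y_1,y_2]\to\cdots\to[y_{s-1},q]$ is a walk in $\mathcal{G}$ whose first oriented edge $[x,y_1]$ lies in $W$, so $[y_{s-1},q]\in W$. One further step --- the arc $[p,q]\to[q,y_{s-1}]$ followed by a walk back along the $y_i$ to $x$, together with a short separate check in the degenerate cases where $q$ is a leaf or $p=x$ --- then places $[p,q]$ itself in $W$. Hence the underlying graph of $\mathcal{G}$ is connected, and the corollary follows.

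The step I expect to be the main obstacle is this last one: one has to check that the colour information can be propagated \emph{against} the orientation of the edges, and this is exactly where the branch vertex $x$ is indispensable, since it is what lets a non-backtracking walk ``turn around''. In practice the care goes into handling pendant paths that terminate at a leaf --- propagate outward to the nearest vertex of degree $\geq 3$, use the rigidity there, then propagate back inward --- and into always choosing the auxiliary paths emanating from $x$ to be simple, so that each consecutive triple of vertices along them gives a genuine non-backtracking step.
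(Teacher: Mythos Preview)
Your argument is correct and considerably more careful than what the paper offers: the paper states the corollary as one of two ``immediate consequences'' of Theorem~\ref{thm:part} and gives no proof at all, relying on Figure~\ref{fig:thm-partite} to convey that once one oriented edge is pinned down, the rest are forced. Your proof makes this precise by recasting the claim as weak connectivity of the non-backtracking graph $\mathcal{G}$ and then proving that connectivity directly, using a branch vertex $x$ to glue the two ``directed halves'' together. This is essentially the same underlying mechanism the paper has in mind (the observation that at a vertex of degree $\geq 3$ all incoming oriented edges share a class and all outgoing ones share the next class), but packaged in a cleaner and reusable form.

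You also spotted something the paper glosses over: as stated, the corollary is false for cycle graphs. For $C_N$ circularly $k$-partite, the clockwise and counterclockwise halves of $\mathcal{O}$ are genuinely independent, so fixing $c([v,w])$ does not determine $c([w,v])$; your parenthetical remark that ``path graph'' must here be read to exclude cycles as well is the right diagnosis. The corrected hypothesis is exactly the one your proof uses, namely that $G$ has a vertex of degree $\geq 3$.

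One small point of presentation: in your final paragraph the edge cases are handled correctly but somewhat telegraphically. It is worth saying explicitly that if $p=y_{s-1}$ then $[p,q]=[y_{s-1},q]$ is already in $W$, and that if $q=x$ or $p=x$ then $[p,q]$ is incident to $x$ and hence in $W$; with these three lines the ``turning around'' step needs no separate discussion of leaves.
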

	
	\begin{corollary}\label{cor:dividers}
		If a graph is circularly $k$-partite, then it is also circularly $\ell$-partite, for each number $\ell$ that divides $k$. 
	\end{corollary}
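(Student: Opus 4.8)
The plan is to deduce the statement from Theorem~\ref{thm:part}, doing a short case analysis according to the three families of graphs appearing there, and using the elementary fact that if $\ell$ divides $k$ then every multiple of $k$ is also a multiple of $\ell$. Throughout I may assume $\ell\ge 1$, and the cases $k\in\{1,2\}$ are trivial (all graphs are circularly $1$-partite, and circular $2$-partiteness is bipartiteness), so I focus on $k\ge 3$.

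First I would handle the degenerate families. If $G$ is a path graph of length $M$, then by part~1 of Theorem~\ref{thm:part} it is circularly $k'$-partite for every $k'\in\{1,\ldots,2M\}$; since $\ell\mid k$ forces $1\le\ell\le k\le 2M$, the graph is in particular circularly $\ell$-partite. The same reasoning, now using part~2, disposes of extended star graphs of diameter $M$, for which $1\le\ell\le k\le M$. If instead $G$ is neither a path nor an extended star graph, part~3 tells us that circular $k$-partiteness is equivalent to the two numerical conditions: every cycle length is a multiple of $k$, and $2\ell'$ is a multiple of $k$ whenever $\ell'$ is the length of a path joining two distinct vertices of degree $\ge 3$. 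Since $\ell\mid k$, any integer divisible by $k$ is divisible by $\ell$, so both conditions remain valid with $k$ replaced by $\ell$, and hence $G$ is circularly $\ell$-partite.

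Alternatively — and this is perhaps the more conceptual route, which I would at least mention — one can give a direct construction from the partition itself. Writing $k=\ell m$ and letting $c:\mathcal{O}\to\mathbb{Z}_k$ be the periodic colouring induced by a partition $\mathcal{O}=\mathcal{O}_1\sqcup\cdots\sqcup\mathcal{O}_k$, one composes $c$ with the canonical surjection $\pi:\mathbb{Z}_k\to\mathbb{Z}_\ell$ to obtain $\bar c:=\pi\circ c:\mathcal{O}\to\mathbb{Z}_\ell$. Because $\pi$ is a group homomorphism with $\pi(1)=1$, the implication $c([v,w])=j\Rightarrow c([w,z])=j+1$ (for all $z\sim w$ with $z\ne v$) descends to the analogous implication for $\bar c$ in $\mathbb{Z}_\ell$. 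Setting $\mathcal{O}'_i:=\bar c^{-1}(i)$ for $i\in\{1,\ldots,\ell\}$ then yields a partition witnessing that $G$ is circularly $\ell$-partite, and each $\mathcal{O}'_i$ is non-empty since it contains $\mathcal{O}_i$ (recall $i\le\ell\le k$).

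I do not expect any genuine obstacle: the result really is immediate once Theorem~\ref{thm:part} is in hand. The only points that need a moment's care are verifying the non-emptiness of the classes $\mathcal{O}'_i$ in the direct argument, and, in the path and extended-star cases, confirming that $\ell$ indeed stays inside the stated numerical range $\{1,\ldots,2M\}$ (resp.\ $\{1,\ldots,M\}$) — both of which follow at once from $1\le\ell\le k$.
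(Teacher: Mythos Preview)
Your first route---the case split via Theorem~\ref{thm:part}---is exactly what the paper has in mind: the paper states the corollary with no proof beyond calling it an ``immediate consequence'' of Theorem~\ref{thm:part}, and your argument simply spells out that immediacy. Your second, direct route (pushing the colouring forward along the surjection $\mathbb{Z}_k\twoheadrightarrow\mathbb{Z}_\ell$) is a genuinely different and arguably cleaner proof: it bypasses the structural characterisation entirely, works uniformly without the path/extended-star case split, and in particular avoids the small wrinkle that parts~1 and~2 of Theorem~\ref{thm:part} are stated only as sufficient conditions (so one must separately observe that circular $k$-partiteness of a path or extended star forces $k\le 2M$ or $k\le M$, respectively). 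Either argument is perfectly adequate here.
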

	
	Hence, it is natural to ask what the largest $k$ such that a graph is circularly $k$-partite is.

	\section{Oriented edge periodic colouring number}\label{section:chi^p}

	In this section we focus on the \emph{oriented edge periodic colouring number} $\chi^o=\chi^o(G)$, which we defined as the largest $k$ such that $G$ is circularly $k$--partite.\\

	We start by considering the following examples.
	\begin{itemize}
		\item[--] For the path graph of length $M$, $\chi^o=2M$.
		\item[--] For star graphs, $\chi^o=2$. \item[--] More generally, for an extended star graph of diameter $M$, $\chi^o=M$.
		\item[--] For the cycle graph on $N$ nodes, $\chi^o=N$.
		\item[--] The \emph{petal graph} with $\ell \geq 1$ petals of length $k \geq 3$ is the graph given by $\ell$ cycle graphs of length $k$, all having a common central vertex. Clearly, for a petal graph whose petals have length $k$, we have $\chi^o=k$. 
	\end{itemize}
	
	We can now improve Corollary \ref{cor:dividers} as follows.

	\begin{theorem}\label{thm:dividers}
		If $G$ is neither a path nor an extended star graph, then for every $k\geq 1$,
		\begin{equation*}
		G\text{ is circularly $k$--partite } \iff k \text{ divides }\chi^o.
		\end{equation*}
	\end{theorem}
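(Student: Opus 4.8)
The plan is to derive this characterisation directly from Theorem \ref{thm:part}, using the fact that for a graph $G$ which is neither a path nor an extended star, being circularly $k$-partite is governed entirely by two divisibility conditions: $k$ must divide the length of every cycle, and $k$ must divide $2\ell$ for every path-length $\ell$ between two vertices of degree $\geq 3$. Call such an integer $k$ \emph{admissible}. The key structural observation I would isolate first is that the set of admissible $k$ is exactly the set of divisors of a single integer, namely $\gcd$ of the relevant quantities. More precisely, let $m$ be the greatest common divisor of all cycle lengths together with all the numbers $2\ell$ arising from paths between degree-$\geq 3$ vertices (this set is finite since $G$ is finite, and nonempty since $G$ is not a path or extended star, so it contains at least one cycle or at least two branch vertices — in fact a graph with no cycle and at most one branch vertex is a path or extended star). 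Then $k$ is admissible $\iff k \mid m$, simply because a positive integer divides every element of a finite set of positive integers if and only if it divides their gcd.

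Next I would connect $m$ to $\chi^o$. By definition $\chi^o$ is the largest $k$ for which $G$ is circularly $k$-partite, which by Theorem \ref{thm:part}(3) is the largest admissible $k$, which by the previous paragraph is the largest divisor of $m$, namely $m$ itself. Hence $\chi^o = m$. Substituting back, $G$ is circularly $k$-partite $\iff$ $k$ is admissible $\iff$ $k \mid m = \chi^o$, which is exactly the claimed equivalence. One direction of the final statement — that $k \mid \chi^o$ implies $G$ is circularly $k$-partite — is already Corollary \ref{cor:dividers}, so strictly speaking only the forward implication needs the gcd argument; but presenting the unified "$\chi^o = \gcd$" computation is cleaner and subsumes both directions.

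The main obstacle, such as it is, is bookkeeping rather than depth: I need to make sure the set over which I take the gcd is genuinely nonempty and finite for every $G$ in the stated class, and that Theorem \ref{thm:part}(3) really does say the two conditions are not just necessary but jointly sufficient (it does — the "vice versa" sentence and Figure \ref{fig:thm-partite} cover this). A minor subtlety is whether $\chi^o$ is even well-defined, i.e.\ finite: since $G$ is not a path it contains a cycle or two branch vertices, so there is at least one finite positive constraint, bounding $\chi^o$ above; and every graph is circularly $1$-partite, so the maximum is attained and $\chi^o \geq 1$. With those points noted the argument is short. I would also remark, as a byproduct, that this gives an explicit formula $\chi^o(G) = \gcd\bigl(\{\text{cycle lengths}\} \cup \{2\ell : \ell \text{ a path-length between two branch vertices}\}\bigr)$, which may be worth stating as a corollary.
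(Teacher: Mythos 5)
Your argument is correct. You reduce everything, as the paper does, to the divisibility characterisation of Theorem \ref{thm:part}(3), but the mechanism you use afterwards is different: you show directly that the admissible $k$ are exactly the divisors of $m:=\gcd$ of the finite, nonempty set of constraints (cycle lengths and the numbers $2\ell$), and conclude $\chi^o=m$, from which both implications drop out at once. The paper instead keeps $\chi^o$ abstract: the direction $k\mid\chi^o\Rightarrow$ circularly $k$-partite is quoted from Corollary \ref{cor:dividers}, and the forward direction is proved by contradiction, observing that if $G$ were circularly $k$-partite with $k\nmid\chi^o$, then all constraints would be divisible by $\lcm(k,\chi^o)>\chi^o$, so $G$ would be circularly $\lcm(k,\chi^o)$-partite, contradicting maximality of $\chi^o$. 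The two arguments are dual (gcd of the constraint set versus lcm of two admissible values), and yours is slightly more work up front (you must check the constraint set is nonempty and finite, which you do correctly: a connected acyclic graph with at most one vertex of degree $\geq 3$ is a path or an extended star), but it buys an explicit formula $\chi^o(G)=\gcd\bigl(\{\text{cycle lengths}\}\cup\{2\ell\}\bigr)$ that the paper never states and that immediately recovers its examples (cycle, petal, Mickey Mouse graphs); the paper's lcm argument is shorter and reuses Corollary \ref{cor:dividers}, but yields no such formula.
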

	\begin{proof}
		The $(\Leftarrow)$ follows from Corollary \ref{cor:dividers}. Now, assume that $G$ is circularly $k$--partite. Then, by definition of oriented edge periodic colouring number, we must have $k\leq \chi^o$. Assume also, by contradiction, that $k$ does not divide $\chi^o$. Then, $k>1$ and, by Theorem \ref{thm:part},
		\begin{itemize}
			\item The length of each cycle is a multiple of both $\chi^o$ and $k$, and
			\item The length $\ell$ of any path connecting two distinct vertices of degree $\geq 3$ is such that $2\ell$ is a multiple of both $\chi^o$ and $k$.
		\end{itemize}This implies that
		\begin{itemize}
			\item The length of each cycle is a multiple of $\lcm(k,\chi^o)$, and 
			\item The length $\ell$ of any path connecting two distinct vertices of degree $\geq 3$ is such that $2\ell$ is a multiple of $\lcm(k,\chi^o)$.
		\end{itemize}Therefore, again by Theorem \ref{thm:part}, $G$ is circularly $\lcm(k,\chi^o)$--partite. But since we are assuming that $k\leq \chi^o$ does not divide $\chi^o$, we have that $\lcm(k,\chi^o)>\chi^o$, which leads to a contradiction.
	\end{proof}
	
	
	\begin{lemma}\label{lemma:qr}
		Let $q,r\in \mathbb{N}_{\geq 3}$ be two relatively prime numbers. If $G$ has at least one cycle of length $q$ and at least one cycle of length $r$, then $\chi^o=1$.
	\end{lemma}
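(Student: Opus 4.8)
The plan is to reduce the statement to the cycle-length condition in Theorem~\ref{thm:part}. First I would observe that $G$ contains at least one cycle --- in fact two cycles of \emph{distinct} lengths, since $q\neq r$ (if $q=r$ then $\gcd(q,r)=q\geq 3>1$, contradicting coprimality). In particular $G$ is not a tree, so it is neither a path graph nor an extended star graph, and hence case~3 of Theorem~\ref{thm:part} (equivalently, Theorem~\ref{thm:dividers}) applies to $G$.

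Next, suppose $G$ is circularly $k$--partite for some $k\geq 1$. By the first bullet of Theorem~\ref{thm:part}(3), the length of every cycle of $G$ is a multiple of $k$; applying this to the given cycles of lengths $q$ and $r$ gives $k\mid q$ and $k\mid r$, hence $k\mid\gcd(q,r)=1$, so $k=1$. Since every graph is circularly $1$--partite, $1$ is then the largest $k$ for which $G$ is circularly $k$--partite, i.e.\ $\chi^o=1$. Alternatively, one can invoke Theorem~\ref{thm:dividers} directly with $k=\chi^o$: because $\chi^o$ divides every cycle length, $\chi^o\mid q$ and $\chi^o\mid r$, whence $\chi^o\mid\gcd(q,r)=1$.

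I expect no real obstacle here once Theorem~\ref{thm:part} is available; the only point requiring a moment's care is confirming that $G$ falls under case~3 of that theorem (rather than being a path or an extended star), which is immediate from the presence of a cycle. The remainder is a one-line divisibility argument using coprimality of $q$ and $r$.
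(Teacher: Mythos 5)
Your argument is correct and is exactly the intended route: the paper's proof is simply ``It follows from Theorem~\ref{thm:part}'', and your write-up fills in the same steps (a cycle rules out the path and extended star cases, so the cycle-length condition forces $\chi^o\mid q$ and $\chi^o\mid r$, hence $\chi^o=1$). No gaps; the divisibility argument via coprimality is precisely what the paper leaves implicit.
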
\begin{proof}
		It follows from Theorem \ref{thm:part}.
	\end{proof}
	
	For example, the graphs in Figure \ref{fig:chip14} only differ by one edge subdivison. However, by Lemma \ref{lemma:qr}, the first graph is such that $\chi^o=1$, since the inner cycles have length $3$ while the outer cycle has length $4$. On the other hand, by Theorem \ref{thm:part}, the second graph is such that $\chi^o=4$.\newline
	
	\begin{figure}
		\centering
		\includegraphics[width=10cm]{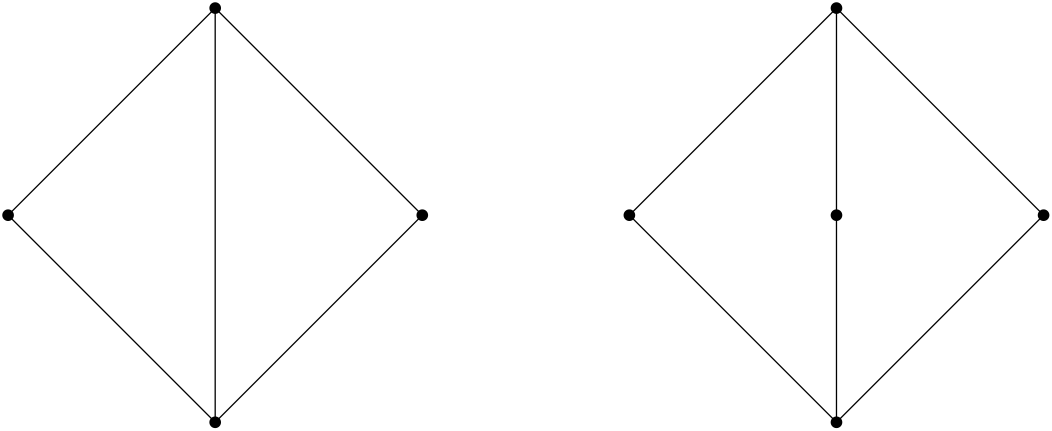}
		\caption{A graph with $\chi^o=1$ and a graph with $\chi^o=4$.}
		\label{fig:chip14}
	\end{figure}

	An immediate consequence of Lemma \ref{lemma:qr} is the following
	
	\begin{corollary}
		Let $K_N$ denote the complete graph on $N$ nodes, and let $G$ be a graph on $N$ nodes. If $K_4$ is a subgraph of $G$, then $\chi^o(G)=1$. In particular, if $N\geq 4$, then $\chi^o(K_N)=1$.
	\end{corollary}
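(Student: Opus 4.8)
The plan is to deduce this directly from Lemma~\ref{lemma:qr} by exhibiting, inside any graph that contains a $K_4$, two cycles whose lengths are relatively prime and both at least $3$. First I would note that $K_4$ itself contains a $3$-cycle (any triangle on three of its four vertices) and a $4$-cycle (a Hamiltonian cycle on its four vertices, e.g.\ $v_1v_2v_3v_4v_1$, which exists since every pair of vertices in $K_4$ is adjacent). Since $\gcd(3,4)=1$ and $3,4\in\mathbb{N}_{\geq 3}$, Lemma~\ref{lemma:qr} applied to the subgraph relation yields $\chi^o(G)=1$: the cycles of $K_4$ are in particular cycles of $G$, so $G$ has a cycle of length $3$ and a cycle of length $4$.

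A small point worth addressing is that Lemma~\ref{lemma:qr} is stated for a graph $G$ having cycles of the two prescribed lengths, and here those cycles live in a subgraph; but a cycle of a subgraph is a cycle of the ambient graph (subdividing or deleting edges elsewhere does not affect the existence of these particular cycles), so the hypothesis of the lemma is met by $G$ itself. No further case analysis is needed: unlike the path and extended-star exceptions in Theorem~\ref{thm:dividers}, a graph containing $K_4$ certainly has cycles, hence is neither a path nor an extended star, and in any case Lemma~\ref{lemma:qr} makes no such exclusion.

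For the ``in particular'' clause, observe that when $N\geq 4$ the complete graph $K_N$ contains $K_4$ as a subgraph (take any four vertices and all edges among them, which are all present in $K_N$). Hence by the first part $\chi^o(K_N)=1$.

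I do not expect any genuine obstacle here; the only thing to be careful about is making the subgraph-to-ambient-graph transfer of cycles explicit, and choosing the pair $(3,4)$ rather than, say, $(3,5)$, since $K_4$ does not contain a $5$-cycle but does contain both a triangle and a $4$-cycle.
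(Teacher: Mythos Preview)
Your proof is correct and follows exactly the approach the paper intends: the corollary is stated as an immediate consequence of Lemma~\ref{lemma:qr}, and you make this explicit by exhibiting the $3$-cycle and $4$-cycle in $K_4$ and invoking $\gcd(3,4)=1$. The additional remarks about cycles in a subgraph being cycles in the ambient graph and about the path/extended-star exceptions being irrelevant are sound but not needed, since Lemma~\ref{lemma:qr} already applies directly to $G$.
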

	
	Hence, for $N\geq 4$, the complete graph $K_N$ has the smallest possible oriented edge periodic colouring number, since $\chi^o(K_N)=1$, and the largest possible vertex colouring number, since $\chi(K_N)=N$.
	
	\begin{proposition}\label{prop:regular}Let $d\geq 3$. If $G$ is $d$-regular, then $\chi^o\in\{1,2\}$.
	\end{proposition}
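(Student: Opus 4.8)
The plan is to reduce the statement directly to part~(3) of Theorem~\ref{thm:part}. First I would note that a connected simple $d$-regular graph with $d\geq 3$ is neither a path graph nor an extended star graph: both of the latter are trees and hence contain vertices of degree $1$, whereas here every vertex has degree exactly $d\geq 3$. Thus the third item of Theorem~\ref{thm:part} applies to $G$; moreover, since \emph{every} vertex of $G$ has degree $\geq 3$, the clause ``vertices of degree $\geq 3$'' appearing there may simply be read as ``vertices''.

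Next, suppose $G$ is circularly $k$-partite for some $k\geq 1$, and let us show $k\leq 2$. Being $d$-regular with $d\geq 3$, the graph $G$ has at least $d+1\geq 4$ vertices and hence at least one edge $\{v,w\}$; then $v$ and $w$ are two distinct vertices of degree $\geq 3$ joined by a path of length $\ell=1$. The second bullet of Theorem~\ref{thm:part}(3) forces $2\ell=2$ to be a multiple of $k$, i.e.\ $k\mid 2$, so $k\in\{1,2\}$. Hence $\chi^o\leq 2$, and since every graph is circularly $1$-partite (Section~\ref{section:characterisation}) we have $\chi^o\geq 1$; therefore $\chi^o\in\{1,2\}$.

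I do not anticipate a real obstacle: the only points needing care are verifying that $G$ indeed falls under case~(3) of Theorem~\ref{thm:part} rather than being a degenerate path or extended star, and checking that an edge counts as a legitimate instance of the second bullet there (it does, being a path of length $\ell\geq 1$ whose endpoints both have degree $d\geq 3$). To round things off I would add a short remark separating the two values: $\chi^o=2$ exactly when $G$ is bipartite — equivalently, circularly $2$-partite, e.g.\ $G=K_{d,d}$ — and $\chi^o=1$ otherwise, for instance whenever $G$ contains an odd cycle, since then the first condition of Theorem~\ref{thm:part} already fails for $k=2$ (compare also the corollary giving $\chi^o(K_N)=1$).
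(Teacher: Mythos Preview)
Your proposal is correct and follows essentially the same argument as the paper: after observing that a $d$-regular graph with $d\geq 3$ falls under case~(3) of Theorem~\ref{thm:part}, you use any edge as a path of length $\ell=1$ between two vertices of degree $\geq 3$ to force $k\mid 2$. The paper argues identically, applying the condition directly with $k=\chi^o$ rather than to an arbitrary $k$, but this is only a cosmetic difference.
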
\begin{proof}Clearly, by the assumptions, $G$ is neither a path nor an extended star graph. Hence, by Theorem \ref{thm:part} and since all vertices have degree $d\geq 3$, the length $\ell$ of any path connecting two distinct vertices is such that $2\ell$ is a multiple of $\chi^o$. This is also true for the distance $\ell=1$ between two adjacent vertices. Therefore, $2$ must be a multiple of $\chi^o$, implying that $\chi^o\leq 2$.\end{proof}
	
	As a consequence of Proposition \ref{prop:regular}, cycle graphs are the only regular graphs for which $\chi^o>2$.\\

	Next, we shall evaluate the vertex and edge colouring number for all graphs that have $\chi^o>1$.\newline
	
	Observe that, for an extended star graph of diameter $M$, $\chi^o=M$ and $\chi=2$. The colouring number of all other graphs that have $\chi^o>1$ is given in the next theorem.
	
	\begin{theorem}\label{thm:chi}
		If $G$ is not an extended star graph, then
		\begin{itemize}
			\item[(a)] $\chi^o$ is even if and only if $\chi=2$.
			\item[(b)] If $\chi^o>1$ is odd, then $\chi=3$.
		\end{itemize}

	\end{theorem}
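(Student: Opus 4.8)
The plan is to analyze the structure forced on $G$ by the hypothesis $\chi^o > 1$, using Theorem~\ref{thm:part} together with standard facts about bipartiteness. The key observation is that a (connected) graph has $\chi = 2$ if and only if it is bipartite, if and only if every cycle has even length; otherwise $\chi \geq 3$. So the whole statement reduces to two implications about the parity of cycle lengths, governed by the divisibility conditions of Theorem~\ref{thm:part}.

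First I would handle the case where $G$ is a path graph: then $\chi^o = 2M$ is even (for $M \geq 1$) and $\chi = 2$, so (a) holds vacuously in the sense that both sides are true, and (b) is vacuous. So assume $G$ is neither a path nor an extended star graph, and apply Theorem~\ref{thm:part}, part~(3): since $\chi^o > 1$, every cycle has length a multiple of $\chi^o$, and every path between two vertices of degree $\geq 3$ has length $\ell$ with $2\ell$ a multiple of $\chi^o$. For direction $(\Leftarrow)$ of (a): if $\chi = 2$ then $G$ is bipartite, so all cycles have even length; I must show $\chi^o$ is even. Here I would argue that if $\chi^o$ were odd, then by Theorem~\ref{thm:dividers} $G$ would be circularly $2\chi^o$--partite (since odd $\chi^o$ and even cycle lengths force all cycle lengths divisible by $\mathrm{lcm}(2,\chi^o) = 2\chi^o$, and likewise $2\ell$ divisible by $2\chi^o$), contradicting maximality of $\chi^o$. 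For direction $(\Rightarrow)$ of (a) and for (b) simultaneously: suppose $\chi^o$ is even; then every cycle length, being a multiple of $\chi^o$, is even, so $G$ is bipartite and $\chi = 2$. Conversely if $\chi^o > 1$ is odd, I need $\chi = 3$; I would first show $\chi \geq 3$ by exhibiting an odd cycle: since $G$ is not an extended star and not a path, and $\chi^o$ is odd and $> 1$, I must rule out that all cycles have even length — but that was just shown to force $\chi^o$ even, a contradiction — so some cycle has odd length (in fact length an odd multiple of $\chi^o$), hence $G$ is not bipartite and $\chi \geq 3$. Then I would show $\chi \leq 3$.

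The main obstacle I anticipate is the upper bound $\chi \leq 3$ when $\chi^o > 1$ is odd. The idea: since $\chi^o = k$ is odd and divides every cycle length, and $2\ell$ is divisible by $k$ for paths between high-degree vertices — so $\ell$ itself is divisible by $k$ (as $k$ is odd, $k \mid 2\ell \Rightarrow k \mid \ell$) — the graph decomposes into "segments" of length divisible by $k$ meeting at branch vertices. I would take the quotient/contraction structure: subdivide nothing, but colour each vertex by its distance modulo $3$ along segments, using that $k$ is odd so that $k \equiv 1$ or $2 \pmod 3$ is not the issue — rather, I would use that a cycle of length $mk$ with $k$ odd has length $\equiv 0, 1,$ or $2 \pmod 3$ but crucially one can $3$-colour any graph whose every cycle has length divisible by an odd number $k > 1$... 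Actually the cleanest route: reduce to the observation that such a $G$ retracts (via contracting each degree-$2$ path of length $k$ to a path of length the same parity class) onto a graph built from odd cycles glued along paths, and any graph in which every cycle is odd-length-compatible in this way is $3$-colourable because it contains no $K_4$ and more carefully has no odd-$K_4$-subdivision obstruction; alternatively, invoke that a graph all of whose cycles have length divisible by $k$ with $k$ odd is a "$k$-partite-like" structure whose vertex colouring number is at most $3$ by an explicit construction colouring along the circular edge-colouring $c : \mathcal{O} \to \mathbb{Z}_k$. I would make this precise by defining $c(v) := $ (the $\mathbb{Z}_k$-label of incoming edges at $v$) reduced appropriately, and check that adjacent vertices get labels differing by $1$ in $\mathbb{Z}_k$; composing with the map $\mathbb{Z}_k \to \{1,2,3\}$ that sends the odd cyclic sequence $0,1,\dots,k-1$ to a proper $3$-colouring of the cycle $C_k$ (possible since any cycle is $3$-colourable) yields a proper $3$-colouring of $G$. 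Verifying this composition is well-defined and proper at branch vertices is the delicate point, and I would lean on Corollary~\ref{cor:ll} to propagate the labelling consistently.
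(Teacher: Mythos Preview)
Your treatment of (a) is correct; the paper's route is a touch more direct, simply chaining $\chi=2 \iff G$ bipartite $\iff G$ circularly $2$-partite and then invoking Theorem~\ref{thm:dividers} to conclude this is equivalent to $2\mid\chi^o$. Your $\lcm$ argument is essentially a re-derivation of that special case, but it works.

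For (b), the bound $\chi\geq 3$ is fine. The gap is in $\chi\leq 3$. You correctly derive that $k:=\chi^o$ divides $\ell$ for every path of length $\ell$ between two vertices of degree $\geq 3$, but then you miss the immediate consequence and head into much harder territory. Your final proposal --- defining $c(v)$ as the $\mathbb{Z}_k$-label of the incoming oriented edges at $v$ --- does not work, because that label is \emph{not} well-defined at degree-$2$ vertices. Concretely: if $v_0$ has degree $\geq 3$ with all outgoing edges in $\mathcal{O}_1$, and $v_0\sim v_1\sim\dots\sim v_\ell$ is a segment ending at another branch vertex, then $[v_{j-1},v_j]\in\mathcal{O}_j$ while $[v_{j+1},v_j]\in\mathcal{O}_{2\ell-j}=\mathcal{O}_{-j}$ (using $k\mid 2\ell$); for $j\not\equiv 0\pmod{k}$ these differ, so the two incoming edges at $v_j$ carry different labels and your map $V\to\mathbb{Z}_k$ is ill-defined. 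The earlier sketches (quotients, retractions, absence of a $K_4$-subdivision) are too vague to assess and would in any case invoke heavy machinery.

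The paper's argument is the one-line consequence you were circling: since $k\mid\ell$ and $k>1$, every such $\ell$ is at least $2$, so no two vertices of degree $\geq 3$ are adjacent. Give all of them colour $1$; the remaining vertices have degree at most $2$ in $G$, and the subgraph they induce is a disjoint union of paths (a cycle among them would, by connectedness, force $G$ to equal that cycle), hence $2$-colourable with colours $2,3$. That yields a proper $3$-colouring.
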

	\begin{proof}
		\begin{itemize}
			\item[(a)] 
			We know that 
			\begin{equation*}
			\chi=2 \iff G\text{ is bipartite} \iff G\text{ is circularly $2$-partite}.
			\end{equation*}By Theorem \ref{thm:dividers}, if $G$ is neither a path nor an extended star graph, this happens if and only if $2$ divides $\chi^o$, that is, $\chi^o$ is even. If $G$ is a path graph of length $M$, the claim follows from the fact that $\chi=2$ and $\chi^o=2M$ is always even.
			\item[(b)] 
			By (a), $G$ is not bipartite, therefore $\chi\geq 3$. Hence, if we show that there exists a proper $3$-colouring of the vertices we are done, as this implies that $\chi \leq 3$. Since $G$ is neither a path (as it is not bipartite) nor an extended star graph (by assumption), it is described by the third point in Theorem \ref{thm:part}, for $k=\chi^o$. Therefore, the length $\ell$ of any path connecting two distinct vertices of degree $\geq 3$ must be such that $2\ell$ is a multiple of $\chi^o$. But since $\chi^o$ is odd, this implies that $\ell$ must be a multiple of $\chi^o$. Hence, since $\chi^o>1$, also $\ell>1$. In particular, two vertices of degree $\geq 3$ cannot be adjacent, as their distance must be greater than $1$. Therefore, we can colour all vertices of degree $\geq 3$ with the same colour. Since all other vertices have degree $1$ or $2$, it is enough to colour them with at most two additional colours to obtain a proper $3$-colouring of the entire vertex set. This proves the claim.
		\end{itemize}
	\end{proof}

	The inverse of Theorem \ref{thm:chi}(b) does not hold. For instance, let $G$ be the graph given by a cycle of length $3$ attached with a cycle of length $4$ by a common central vertex. Then, $\chi=3$ and, by Lemma \ref{lemma:qr}, $\chi^o=1$.\newline
	
	We now consider the edge colouring number. We shall prove that all graphs that have $\chi^o>1$ are of class one.
	
	\begin{theorem}
		For any graph with maximum degree $\Delta$,
		\begin{equation*}
		\chi^o>1 \Rightarrow \chi^*=\Delta.
		\end{equation*}
	\end{theorem}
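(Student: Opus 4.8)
The plan is to show that a graph with $\chi^o > 1$ has a very restricted local structure around its high-degree vertices, and then invoke Vizing's theorem together with a classical sufficient condition for class one. If $G$ is an extended star graph, then it is a tree, and trees are well known to be of class one (for instance, König's theorem gives $\chi^* = \Delta$ for bipartite graphs), so we may assume $G$ is neither a path nor an extended star graph. Also if $\chi^o$ is even then by Theorem \ref{thm:chi}(a) the graph $G$ is bipartite, and again König's edge-colouring theorem gives $\chi^* = \Delta$; so the only case that requires work is when $\chi^o > 1$ is odd. Here I would reuse the argument from the proof of Theorem \ref{thm:chi}(b): since $G$ falls under the third case of Theorem \ref{thm:part} with $k = \chi^o$, the length $\ell$ of any path between two distinct vertices of degree $\geq 3$ satisfies $2\ell \equiv 0 \pmod{\chi^o}$, and because $\chi^o$ is odd this forces $\ell \equiv 0 \pmod{\chi^o}$, hence $\ell \geq \chi^o > 1$. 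In particular \emph{no two vertices of degree $\geq 3$ are adjacent}.

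The key structural consequence is that every edge of $G$ has at least one endpoint of degree $\leq 2$. I would then argue that $G$ contains at most one vertex of maximum degree $\Delta$ — or, more carefully, that the vertices of degree $\Delta$ form an independent set and each is surrounded by vertices of degree $\leq 2$ — and apply the classical result of Vizing/Fournier: if the vertices of maximum degree in $G$ induce a forest (in particular, if they form an independent set), then $G$ is of class one, i.e.\ $\chi^* = \Delta$. Since $\Delta \geq 3$ here (otherwise $G$ is a path or a cycle, both trivially handled), the degree-$\Delta$ vertices all have degree $\geq 3$, so by the previous paragraph they form an independent set, and Fournier's theorem gives $\chi^* = \Delta$. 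To keep the paper self-contained I might instead give a direct edge-colouring: colour the edges greedily, processing the graph so that every "hard" vertex (degree $\Delta$) is handled while its neighbours still have many free colours; since each such vertex is isolated among the high-degree vertices and its neighbourhood consists of degree-$\leq 2$ vertices, a Vizing-chain / fan argument at each high-degree vertex never propagates to another high-degree vertex, so no $(\Delta+1)$-st colour is ever needed.

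The main obstacle, I expect, is making the "high-degree vertices are independent, hence class one" step rigorous without simply quoting Fournier's theorem — the cleanest route is to cite it, but if the author wants an elementary proof, one must carefully set up the induction on edges and verify that a Vizing fan rooted at a degree-$\Delta$ vertex only ever touches vertices of degree $\leq 2$ (except its root), so that a colour missing at the root and at a fan-neighbour can always be found among $\{1,\dots,\Delta\}$ and rotated into place. A subtlety to check is the base/degenerate cases: a path has $\chi^* = \Delta \in \{1,2\}$, a cycle $C_N$ has $\chi^* = 2 = \Delta$ when $N$ is even and $\chi^* = 3 = \Delta+1$ when $N$ is odd — but an odd cycle has $\chi^o = N$ odd and $\chi = 3$, which is consistent with Theorem \ref{thm:chi}(b); wait, an odd cycle is class \emph{two}, so I must double-check that odd cycles do not violate the statement. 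Indeed $C_3$ has $\chi^o = 3 > 1$ yet $\chi^* = 3 = \Delta + 1$, so the theorem as literally stated would be false for $C_3$; hence the intended hypothesis must implicitly exclude cycles (or $\Delta \geq 3$), and I would state the proof under the standing assumption, already in force in this section, that lets us restrict to graphs with a vertex of degree $\geq 3$ — i.e.\ $G$ is neither a path nor a cycle — in which case the "independent maximum-degree set" argument applies cleanly and the odd-cycle counterexample does not arise.
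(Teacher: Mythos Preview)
Your reduction matches the paper's: dispose of the bipartite case (which covers paths, extended star graphs, and even $\chi^o$) via K\"onig's theorem, and in the remaining case ($\chi^o\ge 3$ odd, $G$ not an extended star graph) read off from Theorem~\ref{thm:part} that any two vertices of degree $\ge 3$ lie at distance a multiple of $\chi^o$. From this point the two arguments diverge. You use only that such vertices are pairwise non-adjacent and appeal to Fournier's criterion (maximum-degree vertices independent $\Rightarrow$ class one). The paper instead exploits the stronger consequence that such vertices are at mutual distance $\ge \chi^o \ge 3$ and builds an explicit $\Delta$-edge-colouring in two passes: first colour the edges incident to each vertex of degree $\ge 3$ with colours from $\{1,\dots,\Delta\}$ --- the distance bound guarantees that the stars around distinct high-degree vertices are vertex-disjoint, so this can be done independently at each one; then the remaining edges induce a subgraph of maximum degree $\le 2$, which is finished with three colours. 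Your route is shorter if one is willing to cite Fournier; the paper's route is self-contained and actually uses the extra distance, which your argument does not need.

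Your closing remark about odd cycles is on target and worth stating plainly: $C_{2m+1}$ has $\chi^o=2m+1>1$ but $\chi^*=3=\Delta+1$, so the theorem as written fails for these graphs. The paper's proof simply asserts ``we can now assume $\Delta\ge 3$'' after the bipartite case, which is precisely the unjustified step that quietly excludes odd cycles; you have located a genuine gap in the statement rather than a defect in your own argument.
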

	\begin{proof} If $G$ is bipartite, then $\chi^*=\Delta$ by Kőnig's line colouring Theorem \cite{Konig}. Hence, we can now assume that $G$ is not an extended star graph, $\Delta\geq 3$ and, by Theorem \ref{thm:chi}(a), we can assume that $\chi^o\geq 3$ is odd. In this case, as shown in the proof of Theorem \ref{thm:chi}(b), if $v$ and $w$ are two distinct vertices of degree $\geq 3$, then their distance $d$ must be a multiple of $\chi^o$. Hence, in particular, $d\geq 3$. This allows us to make a proper $\Delta$-colouring of the edges, as follows:
		\begin{enumerate}
			\item We can first colour all edges that are incident to vertices of degree $\geq 3$ with colours $\{1,2,3,\ldots,\Delta\}$, so that two such edges that share a common vertex have different colours.
			\item The endpoints of the remaining edges all have degree $1$ or $2$. Clearly, we can colour such edges with colours $\{1,2,3\}$ so that any two edges in $G$ that meet at a common vertex have different colours.
		\end{enumerate}Therefore, $\chi^*\leq \Delta$, and by Vizing's Theorem, this implies that $\chi^*= \Delta$.
	\end{proof}
	
	\section{Vertex $t$-periodic colouring}\label{Section:Bondy}
	
	Recall that, in the Introduction, we gave the following definitions: 
	
	\begin{definition}A (not necessarily proper) colouring of the vertices of $G$ is \emph{$t$-periodic} if any path of length $t$ in $G$ has endpoints of the same colour.
	\end{definition}

	\begin{definition}
		The \emph{vertex $t$-periodic colouring number} $\chi_t=\chi_t(G)$ is the largest $k$ such that $G$  has a $t$-periodic colouring of the vertices with $k$ colours.
	\end{definition}
	

	Clearly, $\chi_t\geq 1$ for any $t$. We shall assume that $t$ is small enough that, if we know the colours of any given $t$ vertices $v_1\sim \cdots\sim v_t$ that form a path, then by periodically repeating these colours we know how to colour the entire graph. With this assumption, we have that $\chi_t\leq t$, since the colours of $v_1,\ldots, v_t$ (which are at most $t$) can be used to colour the entire graph.\\



	Now, we know that a graph $G$ is bipartite if and only if $\chi=2$. Moreover, in Theorem \ref{thm:chi} we have shown that, if $G$ is not an extended star graph, then this is true if and only if $\chi^o$ is even. Similarly, we can prove the following
	\begin{proposition}\label{prop:chi2}
		$G$ is bipartite if and only if $\chi_2=2$.
	\end{proposition}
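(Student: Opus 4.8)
The plan is to prove both implications directly from the definition of a $2$-periodic colouring, namely that any path of length $2$ has endpoints of the same colour. First I would observe that a path of length $2$ is precisely a triple $u \sim v \sim w$ with $u \neq w$; hence a $2$-periodic colouring $c$ must satisfy $c(u) = c(w)$ whenever $u$ and $w$ have a common neighbour. Since $G$ is connected, this says that $c$ is constant on each ``colour class'' determined by the equivalence relation generated by ``having a common neighbour,'' and the structure of this relation is controlled by bipartiteness.

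For the forward direction, suppose $G$ is bipartite with parts $A$ and $B$. I would define $c$ by $c(x) = 1$ for $x \in A$ and $c(x) = 2$ for $x \in B$. Any path of length $2$ has its two endpoints on the same side of the bipartition (since the middle vertex lies on the opposite side), so $c$ is $2$-periodic and uses $2$ colours; hence $\chi_2 \geq 2$. Combined with the upper bound $\chi_2 \leq 2$ from Lemma \ref{lemma:chit} (taking $t = 2$), this gives $\chi_2 = 2$.

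For the converse, suppose $\chi_2 = 2$, so there is a $2$-periodic colouring $c$ using exactly $2$ colours, say $1$ and $2$. I claim that $c^{-1}(1)$ and $c^{-1}(2)$ form a bipartition, i.e.\ that no edge has both endpoints the same colour. Suppose for contradiction that $v \sim w$ with $c(v) = c(w)$. Since we are in the regime where every vertex is an endpoint of some path of length $2$ (and more simply, since $G$ is connected with at least one edge, every vertex has a neighbour), pick a neighbour $u$ of $v$; then $u \sim v \sim w$ is (up to the trivial case $u = w$, which is excluded since $G$ is simple and loopless so $u \neq w$ unless $\deg v = 1$ — in which case take instead a neighbour of $w$) a path of length $2$, forcing $c(u) = c(w) = c(v)$. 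Propagating this along paths, and using connectedness, one shows $c$ is forced to be constant on all of $V$, contradicting the assumption that both colours are used. Hence $G$ is bipartite.

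The main obstacle is the small bookkeeping in the converse: I need to handle the degenerate case where a purported monochromatic edge $vw$ has $v$ or $w$ of degree $1$, so that the only ``path of length $2$'' through it would have to repeat a vertex. The clean fix is to note that if $c(v) = c(w)$ for an edge $vw$, then along any walk from $v$ the colour can only change at every step in a forced pattern, and a monochromatic edge breaks the ``alternating'' forcing so that the colour class containing $v$ (under the relation generated by $2$-periodicity) must actually swallow a neighbour of the wrong colour, collapsing everything to one colour by connectedness. I would phrase this as: define $c'(x) = c(x)$ if the distance from $v$ to $x$ is even and $c'(x) = 3 - c(x)$ otherwise is not quite it; rather, the right statement is that $2$-periodicity forces $c(x)$ to depend only on the parity of $\operatorname{dist}(v_0, x)$ for a fixed basepoint $v_0$ as long as $G$ is bipartite, and a monochromatic edge creates an odd closed walk, which by a standard argument forces the parity function to be ill-defined and hence $c$ constant — contradiction with $\chi_2 = 2$.
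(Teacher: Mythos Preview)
Your approach is essentially the paper's: the forward direction exhibits the bipartition as a $2$-periodic $2$-colouring and invokes Lemma~\ref{lemma:chit}, and the converse assumes a monochromatic edge and propagates via connectedness to a contradiction.

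Your last paragraph, however, is confused and unnecessary. The parity/odd-walk detour is neither correct as stated (a monochromatic edge in a $2$-periodic colouring does not by itself ``create an odd closed walk'' in $G$) nor needed. The degree-$1$ worry evaporates if you phrase the propagation as the paper does: from a monochromatic edge $v\sim w$ in $V_1$, observe that \emph{no} neighbour of $v$ or of $w$ can lie in $V_2$, since $z\sim v\sim w$ (or $v\sim w\sim z$) would force $c(z)=c(w)$ (resp.\ $c(z)=c(v)$). A leaf simply has no such neighbour, so there is nothing to check. Iterating, every vertex reachable from $\{v,w\}$ stays in $V_1$, so by connectedness $V_2=\varnothing$, contradicting that two colours are used. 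Drop the final paragraph and state the propagation this way.
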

	\begin{proof}
		If $V=V_1\sqcup V_2$ is a partition of the vertex set that makes $G$ bipartite, then we can clearly colour $V_1$ with one colour and $V_2$ with another colour to obtain a $2$-periodic colouring. Together with the fact that $\chi_2\leq 2$, this implies that $\chi_2=2$.\newline
		Assume now that $\chi_2=2$, and let $V=V_1\sqcup V_2$ be the partition of the vertex set which is given by the colours of a $2$-periodic colouring. Assume also, by contradiction, that there exists an edge $v\sim w$ within $V_1$. Then, for each $z\in V_2$, we cannot have $v\sim z$, because otherwise $w$ would have the same colour as $z$, and similarly we cannot have $w\sim z$. Hence, in particular, $v$ and $w$ can only be connected to vertices within $V_1$. By induction, this implies that $V_1$ and $V_2$ are not joined by any edge, which is a contradiction since we are considering connected graphs.
	\end{proof}
	
	We now consider some examples. 
	
	\begin{itemize}
		\item[-] Clearly, $\chi_1=1$ for every graph.
		\item[-] For the path on $N$ nodes and any $t\in\{1,\ldots,N\}$, it is easy to check that $\chi_t=t$.
		\item[-] For the complete graph on $N$ nodes, $\chi_t=1$ for all $t\in \{1,\ldots,N-1\}$.
		\item[-]  In \cite{Bondy-Simonovits} and \cite[Lemma 5.4.4]{Csikvari} it is shown that, for any $t$, if $G$ has average degree $2|E|/|V|\geq 4$, then any $t$-periodic colouring of $G$ has at most $2$ colours, implying that $\chi_t\leq 2$ in this case.
	\end{itemize}
	
	\begin{theorem}\label{thm:cycle}
		Let $G=C_N$ be the cycle graph on $N$ nodes, and let $t\in\{1,\ldots,N\}$. Then,
		\begin{equation*}
		\chi_t=\gcd(t,N).
		\end{equation*}
	\end{theorem}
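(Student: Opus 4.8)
The plan is to identify a $t$-periodic colouring of $C_N$ with a colouring of the cyclic group $\mathbb{Z}_N$ and then read off exactly which colourings are forced. Label the vertices of $C_N$ by $0,1,\ldots,N-1$ in cyclic order, so that $i\sim i+1$ (indices mod $N$). The key observation is that in $C_N$, if $t<N$, a path of length $t$ with one endpoint $i$ has its other endpoint at either $i+t$ or $i-t$ (mod $N$); when $t=N$ there are also paths wrapping all the way around, but the constraint $c(i)=c(i+N)$ is vacuous, so in every case a $t$-periodic colouring is precisely a colouring $c:\mathbb{Z}_N\to\{1,\ldots,k\}$ satisfying $c(i)=c(i+t)$ for all $i$. (One should check the boundary case $t=N$ and also note that for $t<N$ a path of length $t$ genuinely exists from every vertex, so there are no degenerate vertices in the sense discussed before the lemma.)

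Once this reduction is in place, the condition $c(i)=c(i+t)$ for all $i\in\mathbb{Z}_N$ says exactly that $c$ is constant on each coset of the subgroup $\langle t\rangle\leq\mathbb{Z}_N$. Since $\langle t\rangle=\langle \gcd(t,N)\rangle$ has index $\gcd(t,N)$ in $\mathbb{Z}_N$, there are exactly $\gcd(t,N)$ such cosets, and a $t$-periodic colouring may assign an arbitrary colour to each coset independently. The maximum number of distinct colours realisable is therefore the number of cosets, namely $\gcd(t,N)$, which gives $\chi_t(C_N)=\gcd(t,N)$. The upper bound $\chi_t\le\gcd(t,N)$ follows because any colour class must be a union of cosets of $\langle t\rangle$, hence the number of colours is at most the number of cosets; the lower bound follows by colouring the $\gcd(t,N)$ cosets with $\gcd(t,N)$ distinct colours.

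I do not expect any serious obstacle here; the only point requiring a little care is the claim that $t$-periodicity on $C_N$ is equivalent to the single-step relation $c(i)=c(i+t)$. Subtleties: a ``path'' in the sense of the paper's definition has distinct vertices, so when $t$ is close to $N$ one must make sure that the relevant paths of length exactly $t$ exist (for $t\le N$ they do: going $t$ steps in one direction around the cycle uses $t+1\le N+1$ vertices, which are distinct precisely when $t<N$, and for $t=N$ the wrap-around ``path'' revisits the start, so that case contributes no constraint — but it also contributes nothing we need, since $\gcd(N,N)=N$ and the colouring forced by $t<N$ arguments degenerates appropriately; in fact for $t=N$ every colouring is $N$-periodic and $\chi_N=N=\gcd(N,N)$). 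It is cleanest to treat $t=N$ as a separate trivial case and assume $1\le t<N$ for the main argument, where the equivalence with $c(i)=c(i+t)$ is transparent. Everything else is elementary group theory of $\mathbb{Z}_N$.
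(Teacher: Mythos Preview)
Your argument is correct and is essentially the same as the paper's: both identify $t$-periodicity on $C_N$ with the relation $c(i)=c(i+t)$ on $\mathbb{Z}_N$ and then count the resulting colour classes as cosets of $\langle t\rangle$. The only difference is packaging---you invoke the standard fact that $\langle t\rangle$ has index $\gcd(t,N)$, whereas the paper verifies this by the explicit computation $\alpha t\equiv\alpha' t\pmod N\iff\alpha\equiv\alpha'\pmod{N/\gcd(t,N)}$; your separate treatment of the degenerate case $t=N$ is a nice touch that the paper leaves implicit.
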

	\begin{proof}
		Let $\gamma:=\gcd(t,N)$. The fact that $\chi_t\geq \gamma$ is clear by considering a proper $t$-periodic $\gamma$-colouring of the vertices. Hence, it is enough to show that $\chi_t\leq \gamma$. Let
		\begin{equation*}
		v_0\sim v_1\sim \ldots\sim v_{N-1}
		\end{equation*}be the vertices of $G$, and let $c:V\rightarrow \{1,\ldots,\chi_t\}$ be a $t$-periodic colouring. Moreover, write $t=\gamma e$ and $N=\gamma f$, so that $\gcd(e,f)=1$. If we see the indices of all the vertices as elements of the multiplicative group $\mathbb{Z}_N$ of integers modulo $N$, then by definition of $t$-periodic colouring, we have that 
		\begin{equation*}
		c(v_i)=c(v_{\alpha t +i}), \quad  \forall \alpha\in\mathbb{Z}_N \text{ and }\forall i\in \{0,\ldots,N-1\}.
		\end{equation*} Therefore, if we show that for each $i\in \{0,\ldots,N-1\}$ there are $f$ distinct elements of the form $\alpha t+i$ (for $\alpha \in\mathbb{Z}_N$) in $\mathbb{Z}_N$,
		we can infer that there are at least $f$ distinct vertices that have colour $c(v_i)$. In particular, this would imply that 
		\begin{equation*}
		\chi_t\leq  \frac{N}{f}=\gamma.
		\end{equation*} Without loss of generality, we can consider $i=0$ and ask how many distinct elements of the form $\alpha t$ (for $\alpha \in\mathbb{Z}_N$) are in $\mathbb{Z}_N$. We have
		\begin{align*}
		\alpha t\equiv \alpha' t \mod{N} &\iff N\vert (\alpha-\alpha')t\\
		&\iff \gamma f\vert (\alpha-\alpha')\gamma e\\
		&\iff f\vert (\alpha-\alpha')e\\
		&\iff f\vert (\alpha-\alpha')\\
		&\iff \alpha \equiv \alpha' \mod{f}.
		\end{align*}Since $f$ is the number of distinct elements $\alpha\in\mathbb{Z}_f$, this proves the claim.
	\end{proof}
	
	An immediate consequence is the following \begin{corollary}\label{cor:cycle}  If $G$ has a cycle of length $\ell$, then for $t\in\{1,\ldots,\ell\}$,\begin{equation*} \chi_t\leq \gcd(t,\ell).\end{equation*}\end{corollary}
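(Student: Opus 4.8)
The plan is to reduce the statement directly to Theorem~\ref{thm:cycle}. Suppose $G$ contains a cycle $C$ of length $\ell$, and fix $t\in\{1,\dots,\ell\}$. The key observation is that a $t$-periodic colouring of $G$ restricts to a $t$-periodic colouring of the induced subgraph on the vertices of $C$; since $C$ is a cycle on $\ell$ nodes, any path of length $t$ inside $C$ (with $t\le\ell$) is in particular a path of length $t$ in $G$, so its endpoints receive the same colour under the given colouring of $G$. Hence the restriction is a legitimate $t$-periodic colouring of $C_\ell$.

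Next I would carry out the counting. Let $c:V(G)\to\{1,\dots,\chi_t(G)\}$ be a $t$-periodic colouring of $G$ using the maximum number $\chi_t(G)$ of colours. Its restriction to $V(C)$ uses at most $\chi_t(C_\ell)$ colours, by definition of $\chi_t(C_\ell)$ as the maximum number of colours in a $t$-periodic colouring of $C_\ell$. By Theorem~\ref{thm:cycle}, $\chi_t(C_\ell)=\gcd(t,\ell)$. So the restriction uses at most $\gcd(t,\ell)$ colours. This alone does not immediately bound $\chi_t(G)$, because the colours used outside $C$ could be additional ones; so I need one more ingredient.

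The missing ingredient is that, since $G$ is connected and $t\le\ell$, every vertex of $G$ lies at distance a multiple of $t$ from some vertex of $C$, forcing its colour to coincide with that of a vertex on $C$. Concretely, for any $w\in V(G)$, take a shortest path from $w$ to $C$; walking around $C$ we can realise, from the first vertex $v_0\in V(C)$ we reach, a path of length $t$ whose endpoints are $v_0$ and some other vertex of $C$ (this uses $t\le\ell$ so the cycle is long enough to ``absorb'' the period), and more generally we can reach $w$ from a vertex of $C$ along a path whose length is a multiple of $t$ by the same absorbing trick — exactly the argument already used in Lemma~\ref{lemma:chit} and in the proof of Theorem~\ref{thm:cycle}. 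It follows that $c(w)\in c(V(C))$, so $c$ uses no more colours than its restriction to $C$, i.e. $\chi_t(G)\le\chi_t(C_\ell)=\gcd(t,\ell)$.

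The main obstacle I anticipate is the third step: making the ``absorbing'' argument fully rigorous, i.e. showing that for every vertex $w$ there is a path in $G$ of length exactly a multiple of $t$ joining $w$ to a vertex of $C$. The cleanest route is to pick a vertex $v_0\in V(C)$ minimising $d(w,v_0)$, write $d(w,v_0)=qt+r$ with $0\le r<t$, and use the cycle $C$ (together with $t\le\ell$) to lengthen the path from $v_0$ into $C$ by exactly $t-r$ extra steps that stay on $C$, yielding a path from $w$ to a vertex of $C$ of length $(q+1)t$ — hence $c(w)$ equals the colour of that vertex of $C$. Care is needed that this lengthened walk is a genuine path (no repeated vertices); since $t-r\le t\le\ell=|V(C)|$, walking $t-r$ steps around $C$ from $v_0$ never repeats a vertex, and it cannot collide with the shortest $w$–$v_0$ path by minimality of $d(w,v_0)$. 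Once this is in place, the chain $\chi_t(G)=|c(V(G))|=|c(V(C))|\le\chi_t(C_\ell)=\gcd(t,\ell)$ completes the proof.
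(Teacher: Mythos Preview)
Your approach is the same as the paper's: restrict a $t$-periodic colouring of $G$ to the cycle $C_\ell$ and invoke Theorem~\ref{thm:cycle}. The paper's proof consists of exactly those two sentences and does not address your third step at all --- it tacitly takes for granted that every colour used on $G$ already appears on $C$. You are right that this requires an argument, and your absorbing construction (shortest path to the nearest cycle vertex, then walk around $C$ to make the total length a multiple of $t$) supplies it correctly.

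One small correction: handle the case $r=0$ separately. There the shortest $w$--$v_0$ path already has length $qt$, so no extension is needed; your recipe would extend by $t-r=t$ steps, and in the boundary case $t=\ell$ that walk returns to $v_0$ and is no longer a simple path, so your justification ``$t-r\le t\le\ell$ hence no repeats'' is off by one. For $r\ge 1$ you have $t-r<\ell$ and the argument goes through as written.
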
\begin{proof}  Clearly, if $C_{\ell}$ is a subgraph of $G$, then any $t$-periodic colouring of $G$ is also a $t$-periodic colouring of $C_{\ell}$. And by Theorem \ref{thm:cycle}, any $t$-periodic colouring of $C_{\ell}$ must be $\leq \gcd(t,\ell)$. \end{proof}
	
	We now consider graphs with minimum degree $\geq 2$ which are not cycle graphs.
	
	\begin{definition}
		The \emph{girth} $g(G)$ of a graph $G$ is the length of a shortest cycle contained in $G$.
	\end{definition}

	\begin{theorem}\label{thm:tau}
		Let $G$ be a graph with minimum degree $\geq 2$ which is not a cycle graph. Fix $t\geq 3$ which is not larger than $g(G)$, and let $\tau:=\bigl\lfloor \frac{t}{2} \bigr\rfloor$. Then, any $t$-periodic colouring must be of the form
		\begin{align}
		&c_0,c_1, \ldots,c_{\tau},\ldots,c_1 \quad \text{(for $t$ even), or}\label{eq:t-even}\\
		&c_0,c_1, \ldots, c_{\tau},c_{\tau},\ldots,c_1 \quad \text{(for $t$ odd).}\label{eq:t-odd}
		\end{align}
		Hence, in particular, 
		\begin{equation*} \chi_t\leq \tau+1,
		\end{equation*}
		with equality if and only if the $\tau+1$ colours in \eqref{eq:t-even} or \eqref{eq:t-odd} can be chosen to be all distinct.
	\end{theorem}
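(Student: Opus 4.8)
The plan is to analyze what the $t$-periodicity constraint forces along paths emanating from a fixed vertex, and to show that the colour of a vertex depends only on its distance to a suitably chosen ``centre'' of a path, modulo the natural folding. First I would fix a path $P = (u_0, u_1, \ldots, u_t)$ of length $t$ in $G$; by the hypothesis that $t$ does not exceed the girth and the minimum degree is $\geq 2$, such a path exists and, moreover, every vertex $u_i$ with $1 \le i \le t-1$ has a neighbour outside $P$ (or at least: the path can be extended in both directions). The key observation is that $t$-periodicity says precisely that the two endpoints of \emph{any} length-$t$ walk along distinct vertices (a path) must share a colour. Sliding such a window of length $t$ by one step at a time along a longer path shows that $c(u_i)$ is determined by $c(u_{i \bmod t})$-type relations; more carefully, using that $G$ is not a cycle and has minimum degree $\ge 2$, I would argue that from $u_0$ one can reach $u_t$ by a path of length $t$, and also $u_1$ is joined to some vertex $w$ with $c(w) = c(u_{t+1})$ along a fresh path, which forces the palindromic pattern.

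The main steps, in order, would be: (1) Show that along any path $v_0 \sim v_1 \sim \cdots \sim v_t$, sliding the length-$t$ window forces, for the configuration local to a high-degree branching or local to the path, the relation $c(v_j) = c(v_{t-j})$ for $0 \le j \le t$. Concretely: take a length-$t$ path ending at $v_0$ on one side, built using the minimum-degree-$\ge 2$ hypothesis (girth $\ge t$ guarantees we don't accidentally close a short cycle and create distinct-vertex failures), to deduce $c(v_0) = c(v_t)$; then shift by one vertex to get $c(v_1) = c(v_{t+1})$ where $v_{t+1}$ is obtained by extending, and combine to get the reflection symmetry $c(v_j) = c(v_{t-j})$. (2) Deduce that the colour sequence along \emph{any} path of length $t$ has the palindromic shape in \eqref{eq:t-even} or \eqref{eq:t-odd}, i.e. it is symmetric about its midpoint, so it is determined by the values $c_0, c_1, \ldots, c_\tau$ at ``distances'' $0, 1, \ldots, \tau$ from an end. (3) Conclude $\chi_t \le \tau + 1$ since at most the $\tau+1$ values $c_0, \ldots, c_\tau$ appear (every vertex lies on such a path, by the standing assumption that $t$ is small enough that every vertex is an endpoint of a length-$t$ path — and by the reflection, any colour used is one of these $\tau+1$). (4) For the equality clause: if the $\tau+1$ colours can be chosen distinct and this choice extends consistently to a global $t$-periodic colouring, then $\chi_t = \tau + 1$; conversely if $\chi_t = \tau+1$ then a colouring achieving it must, by the forced form, use $\tau+1$ distinct colours among $c_0, \ldots, c_\tau$.

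The hard part will be step (1): making rigorous the claim that the window-sliding argument applies, which requires producing, for each relevant pair of vertices, an honest \emph{path} (distinct vertices) of length exactly $t$ realizing the periodicity constraint. This is where the three hypotheses — minimum degree $\ge 2$, $t \le$ girth, and $G$ not a cycle — all get used: minimum degree $\ge 2$ lets us always extend a path by one more vertex; $t \le$ girth ensures the length-$t$ walks we slide are genuinely paths (no repeated vertex), so the $t$-periodicity hypothesis actually applies to them; and $G$ not a cycle ensures there is a branch vertex, which is what ultimately pins down the reflection symmetry rather than merely a rotation (on a cycle one would get $\gcd(t,N)$ instead, as in Theorem \ref{thm:cycle}). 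I would organize step (1) by first handling vertices near a vertex of degree $\ge 3$ and then propagating outward along degree-$2$ paths, invoking a connectivity/induction argument in the spirit of Corollary \ref{cor:ll}. Once the forced palindromic form is established, steps (2)–(4) are essentially bookkeeping.
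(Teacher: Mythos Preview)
Your overall diagnosis is right --- a branch vertex is what upgrades the translational constraint to the palindromic one --- and this is precisely what the paper exploits. But the concrete mechanism you describe in step~(1) does not deliver the reflection. You write: deduce $c(v_0)=c(v_t)$ from one length-$t$ path, then ``shift by one vertex to get $c(v_1)=c(v_{t+1})$ \ldots\ and combine to get the reflection symmetry $c(v_j)=c(v_{t-j})$.'' Both of the identities you produce are \emph{translational}; iterating and combining them yields only $c(v_j)=c(v_{j+mt})$, never $c(v_j)=c(v_{t-j})$. (You yourself note that on a bare cycle, sliding alone gives $\gcd(t,N)$ colours, not a palindrome.) So as written, step~(1) has a real gap: you correctly flag that a degree-$\ge 3$ vertex is essential, but you never say \emph{how} to use it to manufacture the reflection. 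The side remark that ``every vertex $u_i$ with $1 \le i \le t-1$ has a neighbour outside $P$'' is also false in general (internal degree-$2$ vertices), so that cannot be the hook either.

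The paper fills this gap by working on a concrete subgraph rather than an abstract path with sliding windows. It locates a cycle $C_\ell$ with $\ell \ge t$ and a pendant path $v_0 \sim w_1 \sim \cdots \sim w_\tau$ attached at a single vertex $v_0$ of the cycle; the hypotheses (minimum degree $\ge 2$, not a cycle, $t \le$ girth) guarantee such a configuration. The point is that $v_0$ now has \emph{three} outgoing directions --- around the cycle in the $+$ sense, around the cycle in the $-$ sense, and along the $w$-path --- and one routes length-$t$ paths through $v_0$ using different pairs of directions. For each $k \le \tau$, the path $w_k \sim \cdots \sim w_1 \sim v_0 \sim v_1 \sim \cdots \sim v_{t-k}$ gives $c(w_k)=c(v_{t-k})$, while $w_k \sim \cdots \sim w_1 \sim v_0 \sim v_{-1} \sim \cdots \sim v_{-(t-k)}$ gives $c(w_k)=c(v_{k-t})$; combining these with the length-$t$ arcs along the cycle through $v_0$ yields $c(v_k)=c(w_k)=c(v_{t-k})$, which is the palindrome on $v_0,\dots,v_{t-1}$. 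This ``three rays through a branch'' computation is the missing ingredient in your sketch. Once the palindrome is established on one length-$(t-1)$ path, your steps~(2)--(4) go through, since (as in the proof of Lemma~\ref{lemma:chit}) every colour used in $G$ already appears among $c(v_0),\dots,c(v_{t-1})$.
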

	\begin{proof}
		Since we are assuming that $G$ is a graph with minimum degree $\geq 2$ which is not a cycle graph, and since we are assuming that $t$ is not larger than $g(G)$, one can check that $G$ contains a subgraph of the form $C_\ell \sqcup P_\tau$ which is given by a cycle on $\ell\geq t$ nodes attached to a path of length $\tau$ at exactly one vertex $v_0$. 
		Let 
		\begin{equation*}
		v_0\sim v_1\sim \ldots\sim v_{\ell-1}
		\end{equation*}denote the vertices within the cycle $C_\ell$, and consider the indices of these vertices as elements of $\mathbb{Z}_{\ell}$. Let also
		\begin{equation*}
		v_0\sim w_1\sim \ldots \sim w_\tau
		\end{equation*}be a path of length $\tau$ such that the vertices $w_i$'s are not in $C_{\ell}$.

		\begin{figure}[h]
			\centering
			\includegraphics[width=4cm]{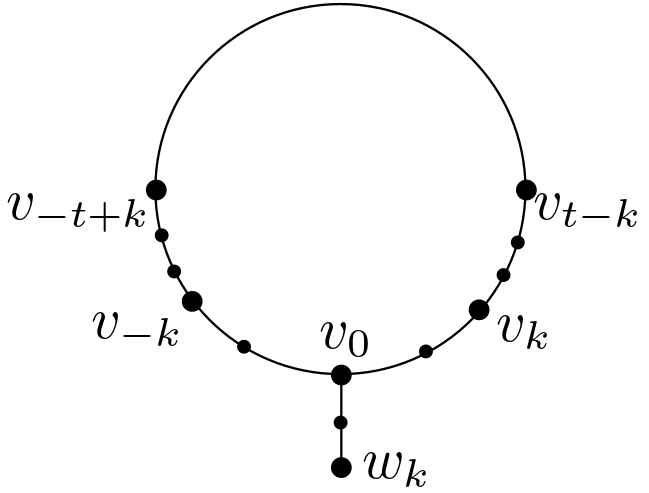}
			\caption{The vertices $w_k$, $v_k$, $v_{-k}$, $v_{t-k}$ and $v_{-t+k}$ all have the same colour.}
			\label{fig:CPk}
		\end{figure}

		For $k\leq \tau$, any $t$-periodic colouring $c$ must satisfy
		(Figure \ref{fig:CPk})
		\begin{equation*}
		c(w_k)=c(v_{t-k})=c(v_{-k})
		\end{equation*} and
		\begin{equation*}
		c(w_k)=c(v_{-t+k})=c(v_k).
		\end{equation*}
		The condition $$c(w_k)=c(v_k)=c(v_{t-k}) \quad \forall k\in \{1,\ldots, \tau\}$$ implies that
		\begin{align*}
		& c(w_1)=c(v_1)=c(v_{t-1}),\\
		& c(w_2)=c(v_2)=c(v_{t-2}),\\
		&\ldots\\
		& c(w_{\tau})=c(v_\tau)=c(v_{t-\tau}).
		\end{align*}
		
		Therefore, if $t=2\tau$ is even, then the $t$-periodic colouring has to be of the form
		\begin{equation*}
		c(v_0),c(w_1), \ldots, c(w_{\tau}),\ldots,c(w_1).
		\end{equation*}If $t=2\tau+1$ is odd, then the $t$-periodic colouring has to be of the form
		\begin{equation*}
		c(v_0),c(w_1), \ldots,c(w_{\tau}),c(w_{\tau}),\ldots,c(w_1).
		\end{equation*}
		This proves the claim.
		
	\end{proof}

	\section{Vertex $t$-periodic colouring and oriented edge periodic colouring}\label{Section:both}
	
	In this concluding section, we show that there is a deep relationship between the vertex $t$-periodic colouring and the oriented edge periodic colouring. We start with the following

	\begin{proposition}\label{prop:cyclet}
		If $G$ is the cycle graph on $N$ nodes and $t\in \{1,\ldots,N\}$, then 
		\begin{equation*} \chi_t= t \iff G \text{ is circularly $t$-partite}.
		\end{equation*}
	\end{proposition}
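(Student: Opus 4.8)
The plan is to reduce both sides of the equivalence to the single arithmetic condition $t \mid N$, using the explicit computations already available for cycle graphs. First I would invoke Theorem~\ref{thm:cycle}, which gives $\chi_t = \gcd(t,N)$ for $G = C_N$ and $t\in\{1,\ldots,N\}$. Since $\gcd(t,N) = t$ holds precisely when $t$ divides $N$, this immediately yields
\begin{equation*}
\chi_t = t \iff t \mid N.
\end{equation*}

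Next I would handle the right-hand side. For $N \geq 3$ the cycle graph $C_N$ has minimum degree $2$, so it is neither a path nor an extended star graph, and it has no vertex of degree $\geq 3$; hence the second bullet in Theorem~\ref{thm:part}(3) is vacuous, and $C_N$ is circularly $k$-partite if and only if the length of its unique cycle, namely $N$, is a multiple of $k$. Thus $C_N$ is circularly $t$-partite $\iff t \mid N$. (Equivalently, one may recall from the bulleted examples in Section~\ref{section:chi^p} that $\chi^o(C_N) = N$ and apply Theorem~\ref{thm:dividers}, which says that for a graph that is neither a path nor an extended star, being circularly $t$-partite is equivalent to $t \mid \chi^o$.)

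Combining the two displayed equivalences gives $\chi_t = t \iff t \mid N \iff C_N$ is circularly $t$-partite, which is the claim. I do not expect any genuine obstacle here: the statement is essentially a corollary of Theorems~\ref{thm:part} and~\ref{thm:cycle}, and the only points worth a sentence of care are that $C_N$ (with $N\geq 3$, as required by the definition of cycle) is covered by case~(3) of Theorem~\ref{thm:part} rather than by cases~(1)–(2), and that the boundary values $t=1$ and $t=N$ behave correctly, which they do since $\gcd(1,N)=1$ and $\gcd(N,N)=N$.
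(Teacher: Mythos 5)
Your proposal is correct and follows essentially the same route as the paper: reduce $\chi_t = t$ to $t \mid N$ via Theorem~\ref{thm:cycle}, and reduce circular $t$-partiteness of $C_N$ to $t \mid N$ via Theorem~\ref{thm:part}. Your extra remarks (that $C_N$ falls under case~(3) of Theorem~\ref{thm:part} with the second condition vacuous) simply make explicit what the paper leaves implicit.
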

	\begin{proof}
		By Theorem \ref{thm:cycle}, $\chi_t= t$ if and only if $\gcd(t,N)=t$, therefore if and only if $t$ divides $N$. By Theorem \ref{thm:part}, this happens if and only if $G$ is circularly $t$-partite.
	\end{proof}

	\begin{theorem}\label{thm:tau+1}
		Let $G$ be a graph with minimum degree $\geq 2$ which is not a cycle graph. Fix $t\geq 3$ which is not larger than $g(G)$, and let $\tau:=\bigl\lfloor \frac{t}{2} \bigr\rfloor$. Then, 
		\begin{equation*}  \chi_t= \tau+1\iff G \text{ is circularly $t$-partite}.
		\end{equation*}
	\end{theorem}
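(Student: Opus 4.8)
The plan is to reduce everything to the characterisation in Theorem~\ref{thm:part}. Since $G$ has minimum degree $\geq 2$ it is neither a path nor a tree, hence neither a path nor an extended star graph, so $G$ is circularly $t$--partite if and only if (i)~every cycle has length divisible by $t$, and (ii)~every path between two distinct vertices of degree $\geq 3$ has length $\ell$ with $t\mid 2\ell$. So the proof splits into showing $\chi_t=\tau+1\Rightarrow$ (i)\,\&\,(ii), and (i)\,\&\,(ii)$\Rightarrow\chi_t=\tau+1$.

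For the direction ``$\Leftarrow$'' I would build the colouring directly from a circular $t$--partition $\mathcal{O}=\mathcal{O}_1\sqcup\cdots\sqcup\mathcal{O}_t$. At a vertex $x$ with $\deg x\geq 3$ all incoming oriented edges lie in one class $\mathcal{O}_{a(x)}$ and all outgoing ones in $\mathcal{O}_{a(x)+1}$; using (i)\,\&\,(ii) one checks that two degree-$\geq 3$ vertices joined by an internally-degree-$2$ path of length $m$ satisfy $a(x')\equiv a(x)+m$ and $2m\equiv 0\pmod t$, and that the values $a(x)$ are therefore all congruent modulo $t/\gcd(2,t)$. After a cyclic relabelling of the classes we may thus assume $2a(x)\equiv 0\pmod t$ for every $x$ of degree $\geq 3$. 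Now define $c(v):=\min\{\,j\bmod t,\ (-j)\bmod t\,\}\in\{0,\dots,\tau\}$, where $\mathcal{O}_j$ is the class of any oriented edge with head $v$. This is well defined: a degree-$2$ vertex $v$ lies on a unique atomic path with a degree-$\geq 3$ endpoint $x$ at distance $d$, its two incoming classes are $a(x)+d$ and $a(x)-d$, and these give the same value of $\min\{j,-j\}$ precisely because $2a(x)\equiv 0$. It is $t$--periodic: along any path $v_0\sim\cdots\sim v_t$ the class of $[v_{i-1},v_i]$ is $j_0+i-1\pmod t$ (classes increase by one step along a non-backtracking walk), so $c(v_i)=\min\{j_0+i-1,\,-(j_0+i-1)\}$ for every $i$ and $c(v_0)=c(v_t)$. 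And it uses all of $\{0,\dots,\tau\}$: restrict $c$ to a single atomic path incident to some fixed degree-$\geq 3$ vertex and compute the values. Together with the bound $\chi_t\leq\tau+1$ of Theorem~\ref{thm:tau}, this yields $\chi_t=\tau+1$.

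For ``$\Rightarrow$'', condition (i) is quick: if $C$ is a cycle then $|C|\geq t$ (as $t$ is at most the girth), so Corollary~\ref{cor:cycle} gives $\tau+1=\chi_t\leq\gcd(t,|C|)$; but $\gcd(t,|C|)$ divides $t$ and exceeds $t/2$, and the only divisor of $t$ exceeding $t/2$ is $t$ itself, so $t\mid|C|$. The main obstacle is condition (ii). I would argue by contradiction: assuming (i) but some atomic path $P$ between distinct degree-$\geq 3$ vertices $u,v$ has length $\ell$ with $2\ell\not\equiv 0\pmod t$ (hence also $\ell\not\equiv 0\pmod t$), I want to show every $t$--periodic colouring $c$ uses at most $\tau$ colours. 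The decisive auxiliary fact is that for any $t$--periodic $c$ and any cycle $C$ one has $c(V)=c(C)$: given $w\in V$, extend a shortest walk from $w$ to $C$ by running around $C$; the result is a non-backtracking walk, along which colours repeat with period $t$, and the $|C|\geq t$ consecutive positions on $C$ contain one at index $\equiv 0\pmod t$. So it suffices to exhibit one cycle carrying at most $\tau$ colours. When $P$ lies on a cycle $C^{*}$, Theorem~\ref{thm:tau} forces the colouring on $C^{*}$ to be of the folded form $\phi(\,\cdot\bmod t)$ with $\phi(j)=\phi(-j)$; applying this around each of $u$ and $v$ (each a degree-$\geq 3$ vertex of $C^{*}$, hence carrying a pendant path of length $\tau$ running off $C^{*}$) gives in addition $\phi(j)=\phi(2\ell-j)$, so $\phi$ is periodic with period $\gcd(2\ell,t)<t$ and $|c(C^{*})|\leq\tau$, the desired contradiction with $\chi_t=\tau+1$.

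The genuinely delicate point, which I expect to require the most care, is the reduction of the general situation to the one just described — in particular when the bad path $P$ contains a bridge and so lies on no cycle, or is too short to host the pendant paths directly. Here I would invoke a leaf block of $G$ (a $2$--connected end block, whose cut vertex has degree $\geq 3$ and lies on a cycle of length divisible by $t$ and carries a long pendant path into the rest of $G$) to produce a concrete cycle and pendant path, and use condition (i) throughout to keep every cycle length divisible by $t$, so that the folded-pattern analysis of Theorem~\ref{thm:tau} and the relation between the two ``pivot'' positions still apply. Once (i) and (ii) are in hand, Theorem~\ref{thm:part} closes the argument.
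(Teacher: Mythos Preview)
Your ``$\Leftarrow$'' direction and your treatment of condition (i) in ``$\Rightarrow$'' are both correct and in fact cleaner than the paper's in places: the explicit colouring $c(v)=\min\{j,-j\}$ built from the oriented-edge partition is an elegant construction, and deducing $t\mid |C|$ from Corollary~\ref{cor:cycle} via the divisor observation is slicker than the paper's route (which leans on the folded pattern of Theorem~\ref{thm:tau}).

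Where you diverge substantially from the paper is condition (ii) in ``$\Rightarrow$'', and here you are making your life much harder than necessary. Your contradiction strategy requires locating a cycle containing the bad path $P$, then analysing pendant paths off that cycle, and you yourself flag the bridge case as the genuinely delicate obstacle --- which it is, under your approach. The paper avoids all of this with a purely \emph{local} argument that never asks whether the two degree-$\geq 3$ vertices lie on a common cycle. Namely: at any vertex $v_0$ with $\deg v_0\geq 3$, pick three distinct neighbours $v_1,v_{-1},w_1$ and extend $v_1$ and $v_{-1}$ (non-backtracking, hence along genuine paths since $t\leq$ girth) to $v_{t-1}$ and $v_{-t+1}$. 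Applying $t$-periodicity to the paths $w_1,v_0,v_1,\dots,v_{t-1}$ and $w_1,v_0,v_{-1},\dots,v_{-t+1}$, together with the paths inside the two rays, forces $c(w_1)=c(v_1)=c(v_{-1})$: every neighbour of $v_0$ carries the same colour. In the folded pattern $c_0,c_1,\dots,c_\tau,\dots,c_1$ with $\tau+1$ distinct colours, the only positions whose two neighbours coincide are $c_0$ (and, for $t$ even, also $c_\tau$). Hence every degree-$\geq 3$ vertex sits at position $0$ or $\tau$ (even $t$) or at position $0$ (odd $t$), and the distance condition $t\mid 2\ell$ drops out immediately --- no global structure, no bridges, no leaf blocks.

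So your proposal is essentially correct where it is complete, but the incomplete part (ii) can be replaced wholesale by this two-line local observation, which is the paper's actual argument.
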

	
	\begin{proof}
		If $t$ is even, then by Theorem \ref{thm:part}, $G$ is circularly $t$-partite if and only if
		\begin{itemize}
			\item The length of each cycle is a multiple of $t$, and
			\item The length of any path connecting two distinct vertices of degree $\geq 3$ is a multiple of $\tau$.
		\end{itemize}
		As a consequence one can check that, in this case, the $t$-periodic colouring in \eqref{eq:t-even} can be made with $\tau+1$ distinct colours, if each vertex of degree $\geq 3$ has either colour $c_0$ or colour $c_\tau$. \newline
		
		Similarly, if $t$ is odd, then again by Theorem \ref{thm:part}, $G$ is circularly $t$-partite if and only if\begin{itemize}
			\item The length of each cycle is a multiple of $t$, and
			\item The length of any path connecting two distinct vertices of degree $\geq 3$ is a multiple of $t$.
		\end{itemize} As a consequence one can check that, in this case, the $t$-periodic colouring in \eqref{eq:t-odd} can be made with $\tau+1$ distinct colours, if all vertices of degree $\geq 3$ have colour $c_0$.\newline
		
		Therefore, we have shown one direction. Assume now that $\chi_t= \tau+1$, and let $c:V\rightarrow \{1,\ldots,\tau+1\}$ be a $t$-periodic colouring. Then, by Theorem \ref{thm:tau}, $c$ must be of the form
		\begin{align*}
		&c_0,c_1, \ldots,c_{\tau},\ldots,c_1 \quad \text{(for $t$ even), or}\\
		&c_0,c_1, \ldots, c_{\tau},c_{\tau},\ldots,c_1 \quad \text{(for $t$ odd),}
		\end{align*}
		where $c_0,\ldots,c_{\tau}$ are $\tau+1$ distinct colours. Hence, clearly, the length of each cycle must be a multiple of $t$. This shows the first condition for circularly $t$-partite graphs.\newline
		
		Now, if $v_0$ is a vertex of degree $\geq 3$, then we can write
		\begin{align*}
		&v_0\sim v_1\sim \ldots \sim v_{t-1},\\
		&v_0\sim v_{-1}\sim \ldots \sim v_{-t+1},\\
		&v_0\sim w_1,
		\end{align*}where:
		\begin{itemize}
			\item The vertices $v_1$, $v_{-1}$ and $w_1$ are all distinct;
			\item The vertices $v_i$, for $i\in\{0,1,\ldots,t-1,-1,\ldots,-t+1\}$ might possibly have indices in $\mathbb{Z}_\ell$, for some $\ell\geq t$. By definition of $t$-periodic colouring,
			\begin{equation*}
			c(w_1)=c(v_{t-1})=c(v_1)
			\end{equation*}and
			\begin{equation*}
			c(w_1)=c(v_{-t+1})=c(v_{-1}),
			\end{equation*}implying that all neighbors of $v_0$ must have the same colour. But since the colours $c_0,\ldots,c_\tau$ are all distinct, this is only possible if either $t$ is even and $v_0$ has colour $c_0$ or $c_\tau$, or $t$ is odd and $v_0$ has colour $c_0$. Hence, this implies that the length of any path connection two distinct vertices of degree $\geq 3$ must be a multiple of $\tau$ (for $t$ even), or a multiple of $t$ (for $t$ odd). That is, $G$ must be circularly $t$-partite.
		\end{itemize}
	\end{proof}

	Hence, summarising the relationship between the $t$-periodic colouring for vertices and the periodic colouring for oriented edges, we have that:
	\begin{enumerate}
		\item Every graph admits a $1$-periodic $1$-colouring of the vertices, as well as a periodic $1$-colouring of the oriented edges.
		\item  A graph is bipartite if and only if it admits a $2$-periodic $2$-colouring of the vertices (by Proposition \ref{prop:chi2}), and if and only if it admits a periodic $2$-colouring of the oriented edges (by Theorem \ref{thm:chi}).
		\item  If $G$ is the cycle graph on $N$ nodes and $t\in \{1,\ldots,N\}$, then $\chi_t\leq t$, and
		\begin{equation*} \chi_t= t \iff G \text{ is circularly $t$-partite} \iff t \text{ divides }N,
		\end{equation*} by Proposition \ref{prop:cyclet} and Theorem \ref{thm:dividers}.
		\item   Let $G$ be a graph with minimum degree $\geq 2$ which is not a cycle graph. Fix $t\geq 3$ which is not larger than $g(G)$, and let $\tau:=\bigl\lfloor \frac{t}{2} \bigr\rfloor$. Then $\chi_t\leq \tau+1$, and
		\begin{equation*}  \chi_t= \tau+1\iff G \text{ is circularly $t$-partite},
		\end{equation*} by Theorem \ref{thm:tau} and Theorem \ref{thm:tau+1}.
	\end{enumerate}

	\subsection*{Conflict of interest}
	The author declares that she has no conflict of interest.

	\subsection*{Acknowledgments}
	Most of these ideas have been developed during one of the author's visits to the Alfréd Rényi Institute of Mathematics in Budapest. She is grateful to Ágnes Backhausz, Peter Csikvari, Péter Frenkel, László Lovász and Giulio Zucal for the inspiring discussions that took place during her visit. She is also grateful to Conor Finn, Joseph Lizier and (again) Giulio Zucal for the discussions that took place at the Max Planck Institute for Mathematics in the Sciences. She is grateful to the anonymous referees for the comments and suggestions that have improved the paper.

	\bibliographystyle{plain} 
	
	\bibliography{Colouring}

\begin{thebibliography}{10}

\bibitem{Appel1977}
K.~Appel and W.~Haken.
\newblock The solution of the four-color-map problem.
\newblock {\em Scientific American}, 237(4):108--121, 1977.

\bibitem{Arrigo1}
F.~Arrigo, D.J. Higham, and V.~Noferini.
\newblock Beyond non-backtracking: non-cycling network centrality measures.
\newblock {\em Proceedings. Mathematical, Physical, and Engineering Sciences},
  476, 2020.

\bibitem{Arrigo2}
F.~Arrigo, D.J. Higham, V.~Noferini, and R.~Wood.
\newblock Weighted enumeration of nonbacktracking walks on weighted graphs.
\newblock {\em arXiv preprint arXiv:2202.02888}, 2022.

\bibitem{backhausz2015ramanujan}
{\'A}.~Backhausz, B.~Szegedy, and B.~Vir{\'a}g.
\newblock Ramanujan graphings and correlation decay in local algorithms.
\newblock {\em Random Structures \& Algorithms}, 47(3):424--435, 2015.

\bibitem{bass1992ihara}
H.~Bass.
\newblock {The Ihara-Selberg zeta function of a tree lattice}.
\newblock {\em International Journal of Mathematics}, 3(06):717--797, 1992.

\bibitem{Bondy-Simonovits}
J.~A. Bondy and M.~Simonovits.
\newblock Cycles of even length in graphs.
\newblock {\em Journal of Combinatorial Theory, Series B}, 16(2):97--105, 1974.

\bibitem{bordenave2018nonbacktracking}
C.~Bordenave, M.~Lelarge, and L.~Massouli{\'e}.
\newblock {Nonbacktracking spectrum of random graphs: Community detection and
  nonregular Ramanujan graphs}.
\newblock {\em Annals of probability: An official journal of the Institute of
  Mathematical Statistics}, 46(1):1--71, 2018.

\bibitem{castellano2018relevance}
C.~Castellano and R.~Pastor-Satorras.
\newblock Relevance of backtracking paths in recurrent-state epidemic spreading
  on networks.
\newblock {\em Physical Review E}, 98(5):052313, 2018.

\bibitem{cooper2009}
Y.~Cooper.
\newblock {Properties determined by the Ihara zeta function of a graph}.
\newblock {\em The Electronic Journal of Combinatorics}, 16(1):R84, 2009.

\bibitem{coste2021eigenvalues}
S.~Coste and Y.~Zhu.
\newblock Eigenvalues of the non-backtracking operator detached from the bulk.
\newblock {\em Random Matrices: Theory and Applications}, 10(03):2150028, 2021.

\bibitem{Csikvari}
P.~Csikvari.
\newblock {Discrete Mathematics, Lecture Note}, June 2023.

\bibitem{Euler1736}
L.~Euler.
\newblock Solutio problematis ad geometriam situs pertinentis.
\newblock {\em Commentarii Academiae Scientiarum Imperialis Petropolitanae},
  8(1741):128--140, 1736.

\bibitem{glover2021some}
C.~Glover and M.~Kempton.
\newblock Some spectral properties of the non-backtracking matrix of a graph.
\newblock {\em Linear Algebra and its Applications}, 618:37--57, 2021.

\bibitem{Godsil16}
C.~Godsil and K.~Meagher.
\newblock {\em {Erd\H{o}s-{K}o-{R}ado theorems: algebraic approaches}}, volume
  149 of {\em Cambridge Studies in Advanced Mathematics}.
\newblock Cambridge University Press, Cambridge, 2016.

\bibitem{Gronford1}
P.~Grindrod, D.J. Higham, and V.~Noferini.
\newblock {The Deformed Graph Laplacian and Its Applications to Network
  Centrality Analysis}.
\newblock {\em SIAM Journal on Matrix Analysis and Applications},
  39(1):310--341, 2018.

\bibitem{Guthrie1880}
F.~Guthrie.
\newblock Note on the colouring of maps.
\newblock {\em Proceedings of the Royal Society of Edinburgh}, 10:727--728,
  1880.

\bibitem{hashimoto1989zeta}
K.~Hashimoto.
\newblock Zeta functions of finite graphs and representations of $p$-adic
  groups.
\newblock In {\em Automorphic Forms and Geometry of Arithmetic Varieties},
  volume~15 of {\em {A}dvanced {S}tudies in {P}ure {M}athematics}, pages
  211--280. Elsevier, 1989.

\bibitem{Huang19}
H.~Huang.
\newblock Induced subgraphs of hypercubes and a proof of the sensitivity
  conjecture.
\newblock {\em Ann. of Math. (2)}, 190(3):949--955, 2019.

\bibitem{Jensen2011}
T.~R. Jensen and B.~Toft.
\newblock {\em Graph coloring problems}.
\newblock John Wiley \& Sons, 2011.

\bibitem{NB-Laplacian}
J.~Jost, R.~Mulas, and L.~Torres.
\newblock {Spectral theory of the non-backtracking Laplacian for graphs}.
\newblock {\em Discrete Mathematics}, 346(10):113536, 2023.

\bibitem{Konig}
D.~Kőnig.
\newblock Graphok {\'e}s alkalmaz{\'a}suk a determin{\'a}nsok {\'e}s a halmazok
  elm{\'e}let{\'e}re.
\newblock {\em Mathematikai {\'e}s Term{\'e}szettudom{\'a}nyi Ertesito},
  34:104--119, 1916.

\bibitem{krzakala2013spectral}
F.~Krzakala, C.~Moore, E.~Mossel, J.~Neeman, A.~Sly, L.~Zdeborov{\'a}, and
  P.~Zhang.
\newblock Spectral redemption in clustering sparse networks.
\newblock {\em Proceedings of the National Academy of Sciences},
  110(52):20935--20940, 2013.

\bibitem{Maritz2012}
P.~Maritz and S.~Mouton.
\newblock {Francis Guthrie: A colourful life}.
\newblock {\em The Mathematical Intelligencer}, 34:67--75, 2012.

\bibitem{martin2014localization}
T.~Martin, X.~Zhang, and M.E.J. Newman.
\newblock Localization and centrality in networks.
\newblock {\em Physical Review E}, 90(5):052808, 2014.

\bibitem{Mellor1}
A.~Mellor and A.~Grusovin.
\newblock Graph comparison via the nonbacktracking spectrum.
\newblock {\em Phys. Rev. E}, 99:052309, May 2019.

\bibitem{MulasZZ}
R.~Mulas, D.~Zhang, and G.~Zucal.
\newblock {There is no going back: Properties of the non-backtracking
  Laplacian}.
\newblock {\em Linear Algebra and Its Applications}, 680:341--370, 2024.

\bibitem{pastor2020localization}
R.~Pastor-Satorras and C.~Castellano.
\newblock The localization of non-backtracking centrality in networks and its
  physical consequences.
\newblock {\em Scientific reports}, 10(1):1--12, 2020.

\bibitem{shrestha2015message}
M.~Shrestha, S.V. Scarpino, and C.~Moore.
\newblock Message-passing approach for recurrent-state epidemic models on
  networks.
\newblock {\em Physical Review E}, 92(2):022821, 2015.

\bibitem{terras2010zeta}
A.~Terras.
\newblock {\em Zeta functions of graphs: {A} stroll through the garden}, volume
  128 of {\em {C}ambridge studies in advanced mathematics}.
\newblock Cambridge University Press, 2010.

\bibitem{Leo2007}
L.~Torres.
\newblock {Non-backtracking Spectrum: Unitary Eigenvalues and
  Diagonalizability}.
\newblock {\em arXiv preprint arXiv:2007.13611}, 2020.

\bibitem{torres2020node}
L.~Torres, K.S. Chan, H.~Tong, and T.~Eliassi-Rad.
\newblock Node immunization with non-backtracking eigenvalues.
\newblock {\em arXiv preprint arXiv:2002.12309}, 2020.

\bibitem{torres2021nonbacktracking}
L.~Torres, K.S. Chan, H.~Tong, and T.~Eliassi-Rad.
\newblock Nonbacktracking eigenvalues under node removal: X-centrality and
  targeted immunization.
\newblock {\em SIAM Journal on Mathematics of Data Science}, 3(2):656--675,
  2021.

\bibitem{torres2019non}
L.~Torres, P.~Su{\'a}rez-Serrato, and T.~Eliassi-Rad.
\newblock Non-backtracking cycles: length spectrum theory and graph mining
  applications.
\newblock {\em Applied Network Science}, 4(1):1--35, 2019.

\bibitem{Tutte1978}
W.~T. Tutte.
\newblock Colouring problems.
\newblock {\em The Mathematical Intelligencer}, 1:72--75, 1978.

\bibitem{Vizing}
V.~G. Vizing.
\newblock {On an estimate of the chromatic class of a p-graph}.
\newblock {\em Diskret. Analiz.}, 3:25--30, 1964.

\bibitem{Voloshin2009}
V.~I. Voloshin.
\newblock {Graph Coloring: History, results and open problems}.
\newblock {\em Alabama Journal of Mathematics, Spring/Fall}, 2009.

\bibitem{Wilson2013}
R.~Wilson.
\newblock {\em Four Colors Suffice: How the Map Problem Was Solved-Revised
  Color Edition}.
\newblock Princeton university press, 2013.

\end{thebibliography}
	
\end{document}